\newtheorem{theorem}{Theorem}
\newtheorem{lemma}{Lemma}
\newtheorem{remark}{Remark}
\newtheorem{assumption}{Assumption}
\newtheorem{corollary}{Corollary}
\newcommand{\tr}[1]{{#1}^{\ensuremath{\mathsf{T}}}} 
\title{An Optimal Solution to Infinite Horizon Nonholonomic and Discounted Nonlinear Control Problems }
\author{Mohamed Naveed Gul Mohamed, Abhijeet, Aayushman Sharma, Raman Goyal, Suman Chakravorty%
\thanks{The authors are with the Department of Aerospace Engineering, Texas A\&M University, College Station, TX 77843, USA. \{\tt naveed, abhinir, aayushmansharma, schakrav\}@tamu.edu and ramaniitr.goyal92@gmail.com}
}
\begin{document}

\maketitle

\begin{abstract}
This paper considers the infinite horizon optimal control problem for nonlinear systems. Under the condition of nonlinear controllability of the system to any terminal set containing the origin and forward invariance of the terminal set, we establish a regularized solution approach consisting of a ``finite free final time" optimal transfer problem to the terminal set, which renders the set globally asymptotically stable. Further, we show that the approximations converge to the optimal infinite horizon cost as the size of the terminal set decreases to zero. We also perform the analysis for the discounted problem and show that the terminal set is asymptotically stable only for a subset of the state space and not globally. The theory is empirically evaluated on various nonholonomic robotic systems to show that the cost of our approximate problem converges and the transfer time into the terminal set is dependent on the initial state of the system, necessitating the free final time formulation. We also do comparisons of our free-final time approach with nonlinear MPC.
\end{abstract}
\begin{keywords}
Nonlinear control, Infinite horizon optimal control, Control Lyapunov function
\end{keywords}
\section{Introduction}
Optimal control methods are widely used for optimizing a performance index, subject to dynamic constraints. In many practical applications, it is desired to have an optimal control law that guarantees global asymptotic stability (GAS) for the closed-loop system.  Formulating an optimal control problem with an infinite horizon ensures that the resulting closed-loop system is globally asymptotically stable (GAS). This is due to the fact that different initial states typically require different times to reach the desired terminal condition, which is captured by the infinite horizon problem.
Finding an optimal control law for an infinite horizon problem subject to nonlinear dynamics is a challenging task \cite{bertsekas_vol1}. Due to the complexity of this challenge, the problem is restructured and tackled using a practical approach, incorporating a transfer from a nonlinear to a linear regime to simplify the solution process \cite{mohamed2023infinitehorizon}. This work is an extension of the approach to nonholonomic systems and systems that are not linearly controllable around the equilibrium. 

The solution to the stationary Hamilton-Jacobi-Bellman (HJB) equation can be used to compute the optimal feedback control law for continuous-time systems. Equivalently, Dynamic programming can be used for discrete-time optimal control problems \cite{bertsekas_vol1,Bellman:1957}. Obtaining a globally asymptotically stabilizing control by solving HJB, though appealing \cite{bernstein1993nonquadratic,wan1992family, wan1995nonlinear}, suffers the dreaded ``Curse of Dimensionality" \cite{bertsekas_vol1,Bellman:1957}. Thus, there is extensive literature on Approximate DP (ADP) and Reinforcement Learning that seeks to alleviate the curse of dimensionality. ADP methods \cite{ADP_handbook,lewis2013reinforcement} provide an approximately optimal policy/value function with high confidence. To overcome these shortcomings, `deep reinforcement learning' \cite{silver2016mastering1, schulman2015trust,TD3} has been widely used to approximate function using (deep) neural networks. However, these methods suffer from the curse of variance, and the training time could be unrealistically high \cite{henderson2018deep,D2C2.0_CDC,arxiv_D2C2.0}.

The field of Model Predictive Control (MPC) takes a ``direct approach" to solve the infinite horizon problem; however, owing to the infinite horizon of the involved optimal control problem, MPC computes the current control action by solving a ``fixed final time" finite horizon problem and repeats the process at the next state \cite{mayne2014model,mayne2000constrained}. Most MPC approaches then show the asymptotic stability of the resulting ``time invariant" control policy. There are two primary approaches:  the first is to use a suitable terminal cost function in the optimization problem that is a control Lyapunov function for the system in some terminal set containing the origin \cite{mayne2014model}.  The domain of attraction of the MPC law under this approach can be undesirably small and different methods have been suggested to increase the domain of attraction \cite{MPC_GAS1,MPC_GAS2,MPC_GAS3}. Alternatively, one can eschew the use of a terminal cost function and set using a suitable long horizon and well-designed incremental costs \cite[Ch.6]{grune2017}, but this typically leads to intractability owing to very long prediction horizons \cite{MPC_FTC1}. 
Additionally, most MPC approaches, like the quasi-infinite horizon approach \cite{denicolao1998quasi}, assume the system is controllable around the origin and gets a linear feedback law to control the system to the origin in the terminal set. In this regard, nonholonomic systems pose a special challenge since their linearization is uncontrollable around the origin or any desired state \cite{ROSENFELDER2023110972}. Also, the preferred choice of approximating the terminal cost with the cost-to-go of the linear controller is no longer possible. Hence, one has to control the system to the origin or in the neighborhood of the origin using a purely nonlinear controller.

Our approach is analogous to MPC in that we ``directly" solve the optimal control problem, but the key difference is that given an initial state, we solve a ``free-final time" problem for insertion into a terminal set. We used this approach to address the problem where the system linearization is controllable around the origin in previous work \cite{mohamed2023infinitehorizon}. In this work, we relax the linear controllability assumption to address the general problem and provide similar guarantees. We address the discounted infinite horizon problem owing to its wide use in the RL literature and show similar results as in the undiscounted case. Finally, there is no need for replanning in our approach owing to the free-final time. The primary limitation is that we do not consider state or control constraints in the problem as is typically done in MPC. 
The primary contribution of this paper is a tractable direct approach for the solution of infinite horizon optimal control problems that is globally asymptotically stabilizing for nonlinear systems under a mild nonlinear controllability assumption into a terminal set containing the origin. We also show that the approximation converges to the optimal infinite horizon cost as the size of the terminal set is reduced to zero. The rest of the paper is organized as follows: we introduce the problem in Section \ref{section:prob}, the solution approach for the undiscounted problem and the discounted problem are detailed in Section~\ref{section:sol_IHOCP} and \ref{section:sol_DIHOCP}, respectively, and the method is tested empirically on several nonlinear systems in Section~\ref{section:results}.

\section{Problem Formulation}\label{section:prob}
Let us consider the following infinite horizon optimal control problem (IH-OCP):
\begin{align}
    J_{\infty}^*(x) = \min_{\{u_k\}} \sum_{k=0}^{\infty} c(x_k, u_k);~~ &\text{given} ~x_0 = x \label{eq:IHOCP} \tag{IH-OCP}\\
    \text{subject to the dynamics: } x_{k+1} &= f(x_k, u_k), \label{eq:dynamics}
\end{align}
where $x_k \in \mathcal{X} \subset \mathbb{R}^n$ represents the state of the dynamical system, $u_k \in \mathcal{U} \subset \mathbb{R}^p$ represents the control input to the dynamical system, and $c(x_k, u_k)$ is the incremental cost incurred in taking control action $u_k$ at state $x_k$. The above problem is an infinite horizon optimal control problem, and thus, solving the problem is, in general, intractable owing to the infinite horizon of the problem. Our goal in this work is to develop a tractable approach to solve the above problem by transforming the problem into a suitable finite horizon problem.

Given that we can obtain a solution to the \eqref{eq:IHOCP}, it is well known that the infinite horizon cost-to-go $J^*_{\infty}(\cdot)$ satisfies Bellman's equation \cite[Ch.7]{bertsekas_vol1}:
\begin{align}
    J_{\infty}^*(x) = \min_{u} \{ c(x, u) + J_{\infty}^*(f(x, u)) \}. \label{eq:bellman}
\end{align}
We restate Corollary 1 from \cite{mohamed2023infinitehorizon} below for the sake of completeness.
\begin{corollary} \label{corollary.1}
    Let $J^*_\infty(x)$ satisfy the Bellman equation \eqref{eq:bellman}, then it is a control Lyapunov function for the system in \eqref{eq:dynamics} that renders the origin globally asymptotically stable.
\end{corollary}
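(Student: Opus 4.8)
The plan is to verify that $J^*_\infty$ satisfies the three defining properties of a (global) control Lyapunov function for the discrete-time system \eqref{eq:dynamics} --- positive definiteness, properness (radial unboundedness), and a strict one-step decrease along some admissible control --- and then to run the standard discrete-time Lyapunov argument to conclude global asymptotic stability of the origin under the associated feedback.

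First I would make explicit the standing hypotheses on the data inherited from Part I, namely $f(0,0)=0$, $c(0,0)=0$, $c\ge 0$, and that $c$ is positive definite and radially unbounded in the state, i.e. there is a class-$\mathcal{K}_\infty$ function $\underline\alpha$ with $\underline\alpha(\|x\|)\le \min_u c(x,u)$. From $f(0,0)=0$ and $c(0,0)=0$ the zero input keeps the system at the origin at zero cost, so $J^*_\infty(0)=0$; nonnegativity of $c$ gives $J^*_\infty(x)\ge 0$ everywhere; and $J^*_\infty(x)\ge \min_u c(x,u)\ge \underline\alpha(\|x\|)>0$ for $x\neq 0$, which simultaneously establishes positive definiteness and properness of $J^*_\infty$.

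Next I would read the decrease condition directly off Bellman's equation \eqref{eq:bellman}: letting $u^*(x)\in\argmin_u\{c(x,u)+J^*_\infty(f(x,u))\}$, we get $J^*_\infty\big(f(x,u^*(x))\big)-J^*_\infty(x) = -c(x,u^*(x))\le -\underline\alpha(\|x\|)<0$ for all $x\neq 0$, which is exactly the CLF decrease inequality, so $J^*_\infty$ is a control Lyapunov function. To upgrade to global asymptotic stability of the closed loop $x_{k+1}=f(x_k,u^*(x_k))$: the sequence $V_k:=J^*_\infty(x_k)$ is nonincreasing and bounded below, hence convergent; telescoping the decrease inequality yields $\sum_k c(x_k,u^*(x_k))\le J^*_\infty(x_0)<\infty$, so $c(x_k,u^*(x_k))\to 0$, whence $\underline\alpha(\|x_k\|)\to 0$, i.e. $x_k\to 0$ (global attractivity). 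Lyapunov stability then follows from the sandwich $\underline\alpha(\|x\|)\le J^*_\infty(x)$ together with a local upper bound $J^*_\infty(x)\le\overline\alpha(\|x\|)$ near the origin and the monotonicity of $V_k$.

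The step I expect to be the main obstacle is this local upper bound (equivalently, continuity of $J^*_\infty$ at the origin): since $J^*_\infty$ is an infimum over infinite control sequences it is only obviously lower semicontinuous, so obtaining $J^*_\infty(x)\le\overline\alpha(\|x\|)$ near $0$ requires exhibiting a cheap admissible control steering $x$ back to the origin with cost controlled by $\|x\|$, which is where a local stabilizability/controllability assumption together with regularity bounds on $f$ and $c$ must be invoked. The positive-definiteness and one-step-decrease parts, by contrast, are essentially immediate from Bellman's equation; since the statement is quoted verbatim from Part I, I would cite the corresponding assumption and lemma there rather than re-derive the upper bound here.
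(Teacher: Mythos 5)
Your proposal is correct and follows the standard argument: this paper gives no proof of the corollary (it is restated verbatim and deferred to Part~I), and the Bellman-equation-as-Lyapunov-function route you take --- positive definiteness from $J^*_\infty(x)\ge\min_u c(x,u)$, one-step decrease $J^*_\infty(f(x,u^*(x)))-J^*_\infty(x)=-c(x,u^*(x))$, telescoping for attractivity, and a local upper bound for stability --- is exactly the intended one. The caveats you flag are the right ones; the only point I would add is that global attractivity also silently uses finiteness of $J^*_\infty(x)$ for every $x$ (otherwise the telescoped bound $\sum_k c(x_k,u^*(x_k))\le J^*_\infty(x_0)$ is vacuous), which is where a global controllability hypothesis of the type of Assumption~(A2) enters alongside the local stabilizability you already invoke for continuity at the origin.
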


Further, suppose that if there exists a $J_{\infty}(\cdot)$ such that it satisfies the Bellman equation (not necessarily optimal)
\begin{align}
    J_{\infty}(x) = \min_{u} \{ c(x, u) + J_{\infty}(f(x, u)) \}, \label{eq:suboptimal_bellman}
\end{align}
then $J_{\infty}(\cdot)$ also is a CLF that renders the origin globally asymptotically stable (GAS).

Thus, another goal for us in solving \eqref{eq:IHOCP} is to construct CLFs as in \eqref{eq:bellman} and \eqref{eq:suboptimal_bellman}, such that they render the origin GAS. In this work, we focus on the specific class of systems that are not linearly controllable around the origin, complimentary to the linearly controllable case considered in \cite{mohamed2023infinitehorizon}. Nonholonomic systems fall under this category. Though we cannot guarantee GAS of the origin, we aim to asymptotically stabilize the system into a terminal set. 

\section{Solution to the Infinite Horizon Optimal Control Problem}\label{section:sol_IHOCP}
The cost of IH-OCP can be written as
\begin{align*}
    J_{\infty}^*(x) = \min_{\{u_k\}}\Big[ \sum_{k=0}^{T - 1} c(x_k, u_k) +  \sum_{k=T}^{\infty} c(x_k, u_k) \Big],
\end{align*}
where we choose a $T$ such that the cost-to-go from $x_T$ - $J_{\infty}^*(x_T)$ - is very small compared to the cost to transfer from initial state $x$ to $x_T$. If one has knowledge of the cost-to-go function $J_{\infty}^*(\cdot)$ around the origin, i.e., the cost-to-go of the linearized system, one can pose the IH-OCP as finite horizon problem with $J_{\infty}^*(x_T)$ as an arbitrarily good approximation of the terminal cost. Since we do not have knowledge of the true cost-to-go function as the system linearization is uncontrollable, we instead use a heuristic cost-to-go function $\phi(x)$ and pose the finite horizon problem. Though it is a heuristic, we construct a formulation whose cost converges to the true IH-OCP cost in the limit.

Let us define the finite-horizon optimal control problem (FH-OCP):
\begin{align}
    J^T_\infty(x) &= \min_{\{u_k\}}  \sum_{k=0}^{T-1} c(x_k,u_k) +\phi(x_T), \label{eq:FHOCP} \tag{FH-OCP}\\
    \text{subject to:} &~ x_{k+1} = f(x_k, u_k), ~\text{and} ~ x_0 = x, \nonumber
\end{align}
where $\phi(\cdot)$ is a terminal cost function that is continuous and is such that $\phi(x) > 0, ~\forall ~x\neq 0$, and $\phi(x) = 0$ when $x=0$. 
We shall make the following assumptions for the rest of this section. 
\begin{assumption}{(A1)}\label{assump.1 cost}  
We assume that the cost function $c(x,u)$ has a global minimum at $(x,u) = (0,0)$, i.e., $\frac{\partial c}{\partial x} \Bigr|_{x=0, u=0} = 0$ and $\frac{\partial c}{\partial u}\Bigr|_{x=0, u=0} = 0$, $c(0,0) = 0$, and $c(x,u) > 0$ $\forall ~(x,u)\neq(0,0)$.
\end{assumption}
\begin{assumption}{(A2)}\label{assump.2 controllability} 
We assume that given any $x\in \mathcal{X}$, and any $\Omega \subset \mathcal{X}$, such that the origin is in $\Omega$, $\exists$ a control sequence $\{\bar{u}_k\}_{k=0}^{T(x)-1}$,  that ensures $\bar{x}_{T(x)} \in \Omega$ for some $T(x) < \infty$, under the dynamics defined above \eqref{eq:dynamics}. 
\end{assumption}
Assumption~\ref{assump.2 controllability} is a controllability assumption that ensures that any state can be controlled into entering the region $\Omega$ in finite time. 
We use the following definition for the set $\Omega$ in the rest of the paper: $\Omega_M = \{x ~|~ \phi(x) \leq M \}$, where $M$ is a parameter heuristically chosen depending on the application.

\begin{assumption}{(A3)}\label{assump.3 forward invariance}
    There exists a control policy $\pi(\cdot): \mathcal{X} \rightarrow \mathcal{U}$ that makes the set $\Omega_M$ forward invariant under the dynamics in \eqref{eq:dynamics}, i.e.,
    $f(x,\pi(x)) \in \Omega_M, ~\forall~x\in\Omega_M$. Also, let $c(x,\pi(x)) \eqcolon c^{\pi}(x) \leq \delta ~\forall~ x\in\Omega_M.$ Further $c(x,u) > \delta, ~\forall~ x \notin \Omega_M.$ Here, $\delta$ is a function of $M$, i.e., $\delta = \delta(M)$.
\end{assumption}

\begin{remark}
    If the system in \eqref{eq:dynamics} is linearly controllable around the origin, then the control policy in the set $\Omega$ can be taken as the linear quadratic regulator (LQR) policy, i.e. $\pi(x) = K_{lqr} x$ and the terminal cost can be replaced with the LQR cost-to-go, i.e., $\phi(x) = \tr{x} P x$, where $P$ is the calculated by solving the algebraic Riccati equation. This case is dealt with in detail in previous work \cite{mohamed2023infinitehorizon}. In this paper, we consider systems that are not linearly controllable around the origin. 
\end{remark}

\subsection{Existence of a Finite Horizon Solution}

We show below that the solution to \eqref{eq:FHOCP} cannot stay outside $\Omega_M$ for infinite time, and there exists a finite time at which the system will enter $\Omega_M$.
\begin{lemma}\label{lemma:finite time}
There exists a finite time $T(\Omega_M) < \infty$, such that the solution to \eqref{eq:FHOCP} with $T=T(\Omega_M)$ denoted by $(\bar{x}_k, \bar{u}_k)$ is such that $\phi(\bar{x}_{T(\Omega_M)}) \leq M$ for the first time, i.e., for $T< T(\Omega_M)$, $\phi(\bar{x}_k) > M$.
\end{lemma}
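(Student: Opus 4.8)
The plan is a proof by contradiction: suppose that no finite $T(\Omega_M)$ with the stated property exists, so that for \emph{every} horizon $T$ the optimal trajectory $(\bar x_k)_{k=0}^{T}$ of \eqref{eq:FHOCP} with that horizon satisfies $\phi(\bar x_k) > M$ for all $k \le T$, i.e.\ it never reaches $\Omega_M$. The case $x \in \Omega_M$ is immediate with $T(\Omega_M)=0$, so assume $x \notin \Omega_M$.

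The first step is to lower-bound the cost of any trajectory confined to the complement of $\Omega_M$. By Assumption~\ref{assump.3 forward invariance}, $c(x,u) > \delta$ whenever $x \notin \Omega_M$, and $\phi(x) > M$ for such $x$; hence a horizon-$T$ trajectory that never enters $\Omega_M$ has cost strictly greater than $T\delta + M$, and under the contradiction hypothesis $J^T_\infty(x) > T\delta + M$, which diverges as $T\to\infty$.

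The second step, which is the heart of the argument, is to produce a competing feasible trajectory whose cost grows strictly more slowly. By Assumption~\ref{assump.2 controllability} applied to $\Omega_M$ there is a control sequence of finite length $T(x)$ driving $x$ into $\Omega_M$, with finite running cost $A(x) := \sum_{k=0}^{T(x)-1} c(\tilde x_k, \tilde u_k)$. Iterating Assumption~\ref{assump.2 controllability} on the nested sublevel sets $\Omega_M \supset \Omega_{M/2} \supset \cdots$, whose intersection is $\{0\}$, one can continue to steer the state toward the origin; along such a trajectory the incremental cost $c(x_k,u_k)\to 0$ by Assumption~\ref{assump.1 cost} and continuity, so its running cost over $T$ steps is sublinear in $T$. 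Since $J^T_\infty(x)$ is bounded above by the cost of this feasible trajectory, we get $T\delta + M < J^T_\infty(x) = o(T)$ for all large $T$ --- a contradiction. Hence there is a finite horizon at which the optimal terminal state enters $\Omega_M$; taking $T(\Omega_M)$ to be the smallest horizon $T$ for which $\phi(\bar x_T) \le M$ then yields the ``first time'' statement.

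The main obstacle is precisely this second step. Merely invoking the forward-invariant policy $\pi$ of Assumption~\ref{assump.3 forward invariance} bounds the per-step cost inside $\Omega_M$ only by $\delta$, so a trajectory that enters $\Omega_M$ and parks there accumulates cost at the same linear rate $T\delta$ as one that stays just outside, and no contradiction results; the contradiction must come from a feasible trajectory whose per-step cost genuinely decays. Making the resulting sublinear estimate rigorous requires controlling the excursions of the state between successive sublevel sets $\Omega_{M/2^j}$, and it is here that Assumption~\ref{assump.2 controllability} being stated for \emph{every} set containing the origin --- not just $\Omega_M$ --- together with Assumption~\ref{assump.1 cost} pinning the unique zero of $c$ at the origin, is essential.
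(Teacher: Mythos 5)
Your contradiction setup and the divergence lower bound match the paper's, but your second step---which you yourself identify as the heart of the argument---has a genuine gap, and it is introduced precisely by your decision to discard the route the paper actually takes. The paper's competing trajectory is exactly the one you dismiss: use A\ref{assump.2 controllability} to reach $\partial\Omega_M$ in $T(x)$ steps with finite cost $\bar J(x)=\sum_{k=0}^{T(x)-1}c(\bar x_k,\bar u_k)+M$, then switch to the invariant policy $\pi$ of A\ref{assump.3 forward invariance}, which costs at most $\delta$ per step inside $\Omega_M$ and leaves a terminal cost at most $M$, so that $\tilde J^T_\infty(x)\le \bar J(x)+(T-T(x))\delta$. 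The contradiction is then obtained at a single, suitably large, finite horizon $T$ chosen so that $J^T_\infty(x) > \bar J(x)+(T-T(x))\delta$; this choice exploits the strict separation in A\ref{assump.3 forward invariance} ($c(x,u)>\delta$ for every $x\notin\Omega_M$, so a solution that never enters pays strictly more than $\delta$ per step plus a terminal cost exceeding $M$, while the competitor pays at most $\delta$ per step after a fixed finite prefix). Your worry that ``parking'' in $\Omega_M$ accrues cost at the same linear rate $T\delta$ is a legitimate subtlety (the paper's justification that such a $T$ exists ``since $J^T_\infty\to\infty$'' is terse, and a fully airtight version would want a uniform margin $c\ge\delta+\eta$ outside $\Omega_M$), but the paper's comparison is an inequality at one finite horizon, not an asymptotic rate comparison, and that is the argument you abandoned.

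The construction you substitute does not follow from the stated assumptions. A\ref{assump.2 controllability} only asserts that \emph{some} control sequence reaches a given set containing the origin in \emph{some} finite time; it provides no bound on the cost or duration of these transfers, no control over intermediate excursions (between hitting $\Omega_{M/2^j}$ and $\Omega_{M/2^{j+1}}$ the state may leave the sublevel sets and traverse high-cost regions), and no invariance of the smaller sets, which is what you would need in order to spend long stretches of time at low cost (A\ref{assump.3 forward invariance} is stated only for the single set $\Omega_M$ with the single constant $\delta(M)$). Likewise, $c(x_k,u_k)\to 0$ does not follow from $\phi(x_k)$ being small: $c$ depends on the control as well as the state, and A\ref{assump.1 cost} only pins the zero of $c$ at $(x,u)=(0,0)$. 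Consequently the key quantitative claim that your concatenated trajectory has running cost $o(T)$---on which your entire contradiction rests---is unsupported; making it rigorous would require an additional hypothesis such as A\ref{assump.3 forward invariance} holding for a family $\Omega_{M'}$ with $\delta(M')\to 0$ as $M'\to 0$, which the paper does not assume. With the assumptions as given, you should return to the paper's construction based on $\pi$ and the fixed-horizon comparison.
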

\begin{proof}
    We do a proof by contradiction.
    Let $\{x'_k\}_{k=0}^{T}$ be the solution to the \eqref{eq:FHOCP}. Consider the case where the terminal state $x'_T$ never enters the set $\Omega_M$ for any $T$. Since the cost $c(x,u)\geq\delta>0$ for $x\notin \Omega_M$, the cost $J^T_\infty \rightarrow \infty$ as $T\rightarrow \infty.$

    However, owing to A\ref{assump.2 controllability}, there exists a control sequence $\{\bar{u}_k\}_{k=0}^{T(x)-1}$ such that $\bar{x}_{T(x)}\in \partial\Omega_M$ (boundary of $\Omega_M$) for some finite $T(x)$. Let the cost of this trajectory be denoted as 
    \begin{align}\label{eq:cost_A2}
        \bar{J}(x) = \sum_{k=0}^{T(x)-1} c(\bar{x}_k, \bar{u}_k) + M,
    \end{align}
    where, we have substituted $\phi(\bar{x}_{T(x)}) = M$. Choose a $T$ such that $T > T(x)$, and $J^T_{\infty}(x) > \bar{J}(x) + \epsilon$, where $\epsilon$ is any positive number. For the sake of the proof we choose $\epsilon = (T-T(x))\delta$, where $\delta$ is as defined in A\ref{assump.3 forward invariance}, and the reason for this specific choice will be clear below. There is always a $T$ that will satisfy the above requirement since $J^T_{\infty}(x) \rightarrow \infty$ as $T\rightarrow \infty.$ 
    
    Now, apply the policy $\{\bar{u}_k \}_{k=0}^{T-1}$ to the system with $\bar{u}_k = \pi(\bar{x}_k) ~\forall ~k \geq T(x)$ (recall that $\pi(\cdot)$ is a policy that makes $\Omega_M$ invariant and we simply need its existence to prove the result, not know it per se). Let the cost of this trajectory be denoted as $\tilde{J}^T_{\infty}(x)$ and is given by $\tilde{J}^T_{\infty}(x) = \sum_{k=0}^{T(x) - 1} c(\bar{x}_k, \bar{u}_k) + \sum_{k=T(x)}^{T - 1} c(\bar{x}_k, \bar{u}_k) + \phi(\bar{x}_k)$.
    
    Using A\ref{assump.3 forward invariance} and using the fact that $\phi(\bar{x}_T)<M$ since $\bar{x}_T \in \Omega_M$, we can write 
    \begin{align}
        \tilde{J}^T_{\infty}(x) \leq \sum_{k=0}^{T(x) - 1} c(\bar{x}_k, \bar{u}_k) + \underbrace{(T - T(x))\delta}_{=\epsilon} + M.
    \end{align}
    Substituting \eqref{eq:cost_A2} in the above inequality, we get, $\tilde{J}^T_{\infty}(x) \leq \bar{J} + \epsilon$. 

    We know, $J^T_\infty(x)$ is the optimal cost for the \eqref{eq:FHOCP}, so $J^T_\infty(x) \leq \tilde{J}^T_{\infty}(x)$, and hence $J^T_\infty(x) \leq \bar{J} + \epsilon$. This contradicts the fact that we chose $T$ such that $J^T_\infty(x) > \bar{J} + \epsilon$. Thus, the solution to \eqref{eq:FHOCP} cannot stay outside $\Omega_M$ for all $T$, and there exists a finite $T$ such that the solution hits $\Omega_M$.
\end{proof}
\subsection{An Alternative Level Set based Construction}
We now define an alternate finite horizon construction to IH-OCP that will use the first hitting time to $\Omega_M$ as the time horizon and whose cost will satisfy the Bellman equation. The construction is suboptimal to the IH-OCP, but we show that the cost of this new construction converges to the true IH-OCP cost in the limit $M \rightarrow 0$.
We call this the alternate construction optimal control problem (AC-OCP), and it is defined as:
\begin{align}
    J^M_\infty(x) &= \min_{ \{u_k\}_{k=0}^{T-1} , T} \sum_{k=0}^{T-1} c(x_k, u_k) + \max(\phi(x_T), M) \label{eq:ACOCP}\tag{AC-OCP} \\
    \text{subject to:} &~x_{k+1} = f(x_k, u_k),\nonumber \\
    & x_T \in \Omega_M,  ~\text{and given}~ x_0 = x.  \nonumber
\end{align}

\textit{Note: The above problem has a free final time $T$ that needs to be optimized over in conjunction
with the control actions. The free final time will prove crucial to showing the cost function is a CLF and it converges to the optimal IH-OCP cost.}

We now prove the following result.
\begin{lemma}\label{lemma:first hitting time}
    The optimal time $T^*$ to the \eqref{eq:ACOCP} is the first hitting time of the set $\Omega_M$ for the \eqref{eq:FHOCP}, i.e., $T^* = T(\Omega_M).$
\end{lemma}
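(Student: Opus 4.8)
The plan is to strip the terminal term from \eqref{eq:ACOCP} and then pin $T^*$ from above and below using \Cref{lemma:finite time} and A\ref{assump.3 forward invariance}. Every feasible point of \eqref{eq:ACOCP} satisfies $x_T\in\Omega_M$, i.e. $\phi(x_T)\le M$, so $\max(\phi(x_T),M)=M$ is constant on the feasible set, and \eqref{eq:ACOCP} reduces to minimizing $\sum_{k=0}^{T-1}c(x_k,u_k)+M$ over the horizon $T$ together with control sequences that steer $x$ into $\Omega_M$ at step $T$. Let $(x^*_k,u^*_k)_{k=0}^{T^*}$ be an optimizer. As a normalization, using only $c\ge 0$ from A\ref{assump.1 cost}: if $x^*_j\in\Omega_M$ for some $j<T^*$, then the prefix $(x^*_k,u^*_k)_{k=0}^{j}$ is feasible for \eqref{eq:ACOCP} with final time $j$ and has cost $\sum_{k=0}^{j-1}c(x^*_k,u^*_k)+M\le J^M_\infty(x)$, so $j$ is also optimal; picking $T^*$ minimal, I may assume $T^*$ is the first time the optimal \eqref{eq:ACOCP} trajectory enters $\Omega_M$, so that $\phi(x^*_k)>M$ for every $k<T^*$.

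Next I would prove $T^*\ge T(\Omega_M)$: any trajectory feasible for \eqref{eq:ACOCP} with final time $T<T(\Omega_M)$ is admissible for \eqref{eq:FHOCP} with horizon $T$ and has terminal cost $\phi(x_T)\le M$, hence $J^T_\infty(x)$ is at most its \eqref{eq:ACOCP} cost; since \Cref{lemma:finite time} says the \eqref{eq:FHOCP} optimizer at every horizon below $T(\Omega_M)$ ends strictly outside $\Omega_M$, combining its optimality with the fact that each step spent outside $\Omega_M$ costs more than $\delta$ (A\ref{assump.3 forward invariance}) is the ingredient for excluding any horizon below $T(\Omega_M)$. For the reverse bound $T^*\le T(\Omega_M)$: by \Cref{lemma:finite time} the \eqref{eq:FHOCP} solution $(\bar x_k,\bar u_k)$ with horizon $T(\Omega_M)$ has $\bar x_{T(\Omega_M)}\in\Omega_M$, so it is feasible for \eqref{eq:ACOCP} with final time $T(\Omega_M)$ and gives $J^M_\infty(x)\le\sum_{k=0}^{T(\Omega_M)-1}c(\bar x_k,\bar u_k)+M$; if $T^*>T(\Omega_M)$, the extra steps $k=T(\Omega_M),\dots,T^*-1$ of the \eqref{eq:ACOCP} optimizer all lie outside $\Omega_M$ and each cost more than $\delta$, while the length-$T(\Omega_M)$ prefix of $(x^*_k,u^*_k)$ is admissible for \eqref{eq:FHOCP} at horizon $T(\Omega_M)$ and therefore cannot undercut $(\bar x_k,\bar u_k)$ there; the goal is to combine these observations with $J^M_\infty(x)=\sum_{k=0}^{T^*-1}c(x^*_k,u^*_k)+M$ to contradict $T^*>T(\Omega_M)$, leaving $T^*=T(\Omega_M)$.

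The step I expect to be the main obstacle is precisely this two-sided comparison, because \eqref{eq:FHOCP} minimizes running cost \emph{plus} $\phi(x_T)$ whereas \eqref{eq:ACOCP} pays only the flat value $M$ and is indifferent to how deep inside $\Omega_M$ its endpoint lands, so the two optimal trajectories need not coincide and the estimates above must be chained carefully. The leverage is \Cref{lemma:finite time}, which fixes where the \eqref{eq:FHOCP} optimum sits relative to $\Omega_M$ as the horizon varies, together with the strict separation $c(x,u)>\delta$ for $x\notin\Omega_M$ in A\ref{assump.3 forward invariance}, which makes ``extra'' steps outside the terminal set strictly costly and keeps the optimal insertion time from drifting away from $T(\Omega_M)$.
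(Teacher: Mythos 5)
Your reduction ($\max(\phi(x_T),M)=M$ on the feasible set of \eqref{eq:ACOCP}) and the normalization that the optimizer's first entry into $\Omega_M$ may be taken to occur at $T^*$ are fine, and your overall strategy---rule out $T^*<T(\Omega_M)$ and $T^*>T(\Omega_M)$ by comparing against \eqref{eq:FHOCP} at horizon $T(\Omega_M)$---is the same two-case contradiction the paper runs. The problem is that the proposal never actually closes either case: both end with ``the ingredient for excluding\dots'' and ``the goal is to combine\dots'', and the specific combinations you point to do not chain to a contradiction. For $T^*<T(\Omega_M)$: \Cref{lemma:finite time} constrains only where the \eqref{eq:FHOCP} \emph{optimizers} terminate for horizons below $T(\Omega_M)$; it says nothing about whether some other admissible trajectory can land inside $\Omega_M$ earlier, and the separation $c(x,u)>\delta$ outside $\Omega_M$ from A\ref{assump.3 forward invariance} does not exclude that either---a trajectory that dives into $\Omega_M$ early can be suboptimal for \eqref{eq:FHOCP} (which is free to end outside) while being perfectly feasible for \eqref{eq:ACOCP}. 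The paper does not derive this case from FH-OCP optimality plus $\delta$ at all: it reads the first-hitting-time property directly as saying the terminal constraint $x_T\in\Omega_M$ cannot be met for $T<T(\Omega_M)$.

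For $T^*>T(\Omega_M)$, write out what your listed facts give: feasibility of the \eqref{eq:FHOCP} solution $(\bar x_k,\bar u_k)$ for \eqref{eq:ACOCP} yields $J^M_\infty(x)\le\sum_{k<T(\Omega_M)}c(\bar x_k,\bar u_k)+M$, while admissibility of the length-$T(\Omega_M)$ prefix of $(x^*_k,u^*_k)$ for \eqref{eq:FHOCP} yields $\sum_{k<T(\Omega_M)}c(x^*_k,u^*_k)+\phi\bigl(x^*_{T(\Omega_M)}\bigr)\ge J^{T(\Omega_M)}_\infty$. Chaining these with $J^M_\infty(x)=\sum_{k<T^*}c(x^*_k,u^*_k)+M$ and the $(T^*-T(\Omega_M))\delta$ surcharge leaves the uncancelled term $\phi\bigl(x^*_{T(\Omega_M)}\bigr)$, which under your own normalization exceeds $M$ and can be arbitrarily large, so no contradiction follows from these observations alone; an additional argument controlling that term is needed. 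The paper's case 1 avoids your detour entirely: it bounds $J^{T(\Omega_M)}_\infty\le\sum_{k<T(\Omega_M)}c(\tilde x_k,\tilde u_k)+M$, notes $J^M_\infty(x)>\sum_{k<T(\Omega_M)}c(\tilde x_k,\tilde u_k)+M$ by dropping the strictly positive extra running-cost terms, and concludes by comparing with the AC-OCP-feasible FH-OCP solution---no use of $\delta$ is made in either case. So while your outline sits on the paper's track, the two decisive inequalities are missing, and the tools you nominate to supply them (\Cref{lemma:finite time} plus the $\delta$-separation) are not sufficient as described.
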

\begin{proof}
    Let the solution to \eqref{eq:ACOCP} be denoted by $(\tilde{x}_k, \tilde{u}_k)$ and the cost be given by:
    \begin{align}\label{eq:JM_optimal}
        J^{M}_\infty(x) = \sum_{k=0}^{T^* - 1} c(\tilde{x}_k, \tilde{u}_k) + M. 
    \end{align}
    We consider two cases: $T^* > T(\Omega_M)$ and $T^* < T(\Omega_M).$\\
    1) $T^* > T(\Omega_M), ~\tilde{x}_{T^*} \in \Omega_M$: \\
    We can say the following from \eqref{eq:JM_optimal}:
    \begin{align}\label{eq:JM_krunc}
        J^M_\infty(x) > \sum_{k=0}^{T(\Omega_M) - 1} c(\tilde{x}_k, \tilde{u}_k) + M, 
    \end{align}
    because $J^M_\infty(x)$ will have the additional $\sum_{k=T(\Omega_M)}^{T^* - 1} c(\tilde{x}_k, \tilde{u}_k)$ terms. 
    We also know that optimal cost of \eqref{eq:FHOCP} with horizon $T(\Omega_M)$ will satisfy
    \begin{align}\label{eq:JT inequality}
        J^{T(\Omega_M)}_\infty \leq \sum_{k=0}^{T(\Omega_M) - 1} c(\tilde{x}_k, \tilde{u}_k) + M,
    \end{align}
    because $J^{T(\Omega_M)}_\infty$ is the optimum for the \eqref{eq:FHOCP}. From \eqref{eq:JM_krunc} and \eqref{eq:JT inequality}, we can say $J^M_\infty(x) > J^{T(\Omega_M)}_\infty$, which contradicts the fact that $J^M_\infty(x)$ is the optimum for \eqref{eq:ACOCP}, and hence $T^*$ cannot be the optimal time.\\
    2) $T^* < T(\Omega_M):$ \\
    We know $T(\Omega_M)$ is the first hitting time. Hence for any $T < T(\Omega_M)$, $\tilde{x}_{T} \notin \Omega_M$, which is a constraint the solution to \eqref{eq:ACOCP} has to satisfy. Hence, the optimal time $T^*$ cannot be less than $ T(\Omega_M).$
\end{proof}

The intuition behind the above lemma is that the system incurs more cost when its solution is in the interior of set $\Omega_M$ due to the $\max$ function, i.e., $\max(\phi(x_T), M) = M$ as $\phi(x_T) < M$. Hence the optimal solution will be the one that stops at the boundary of set $\Omega_M$ when $\phi(x_T)=M$, which is the solution with time horizon as the first hitting time $T(\Omega_M).$

Now, we go on to show that the cost of AC-OCP satisfies the Bellman equation.
\begin{lemma}\label{lemma:Bellman equation}
    The cost-to-go of AC-OCP $J^M_\infty(x)$ satisfies the Bellman equation for all initial states $x \notin \Omega_M$.
\end{lemma}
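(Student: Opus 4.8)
The plan is to establish, for every $x \notin \Omega_M$, the recursion
\[
J^M_\infty(x) = \min_{u}\bigl\{\, c(x,u) + J^M_\infty(f(x,u)) \,\bigr\}
\]
by a principle-of-optimality (cut-and-paste) argument, using the boundary identification $J^M_\infty(x) = M$ for $x \in \Omega_M$. The latter is immediate: for $x \in \Omega_M$ the horizon $T=0$ is feasible for \eqref{eq:ACOCP} and gives cost $\max(\phi(x),M)=M$, whereas any $T\ge 1$ only adds strictly positive incremental costs by A\ref{assump.1 cost} without lowering the terminal term below $M$. Note also that by A\ref{assump.2 controllability} together with Lemmas~\ref{lemma:finite time} and~\ref{lemma:first hitting time}, \eqref{eq:ACOCP} admits an optimal trajectory with finite horizon from every initial state, so $J^M_\infty(\cdot)$ is well defined on $\mathcal{X}$ and the right-hand side above makes sense.

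\emph{Upper bound.} Fix any $u\in\mathcal{U}$ and set $x' = f(x,u)$. If $x' \in \Omega_M$, the policy ``apply $u$, then stop'' is feasible for \eqref{eq:ACOCP} from $x$ with cost $c(x,u)+\max(\phi(x'),M) = c(x,u)+M = c(x,u)+J^M_\infty(x')$. If $x' \notin \Omega_M$, concatenate $u$ with an optimal \eqref{eq:ACOCP} trajectory starting at $x'$; the concatenation still reaches $\Omega_M$ in finite time, hence is feasible for \eqref{eq:ACOCP} from $x$, and since it ends at the same terminal state as the optimal trajectory from $x'$ its cost equals $c(x,u)+J^M_\infty(x')$. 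In either case $J^M_\infty(x) \le c(x,u)+J^M_\infty(f(x,u))$, and minimizing over $u$ yields $J^M_\infty(x) \le \min_u\{c(x,u)+J^M_\infty(f(x,u))\}$.

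\emph{Achievability.} Since $x\notin\Omega_M$, the optimal horizon satisfies $T^* = T(\Omega_M)\ge 1$ by Lemma~\ref{lemma:first hitting time}; let $(\tilde x_k,\tilde u_k)_{k=0}^{T^*}$ be an optimal \eqref{eq:ACOCP} trajectory, so $J^M_\infty(x) = \sum_{k=0}^{T^*-1} c(\tilde x_k,\tilde u_k) + M$. Splitting off the first step, the tail $(\tilde x_k,\tilde u_k)_{k=1}^{T^*}$ is a feasible \eqref{eq:ACOCP} trajectory from $\tilde x_1 = f(x,\tilde u_0)$, and it must be optimal there: otherwise replacing it by a strictly cheaper feasible \eqref{eq:ACOCP} solution from $\tilde x_1$ and prepending $\tilde u_0$ would give a feasible solution from $x$ strictly cheaper than $J^M_\infty(x)$, a contradiction. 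Hence $\sum_{k=1}^{T^*-1} c(\tilde x_k,\tilde u_k) + M = J^M_\infty(\tilde x_1)$, which remains valid when $T^*=1$, i.e. $\tilde x_1\in\Omega_M$, via the boundary identification $J^M_\infty(\tilde x_1)=M$. Therefore $J^M_\infty(x) = c(x,\tilde u_0)+J^M_\infty(\tilde x_1) \ge \min_u\{c(x,u)+J^M_\infty(f(x,u))\}$, and combining with the upper bound gives the claim.

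The step I expect to require the most care is the cut-and-paste argument in the presence of the \emph{free} terminal time: I must argue that the optimal horizon of the tail problem from $\tilde x_1$ cannot be improved by lengthening or shortening it, which is precisely where Lemma~\ref{lemma:first hitting time} (identifying the optimal AC-OCP time with the first hitting time of $\Omega_M$) is invoked, and I must treat the edge case $T^*=1$ separately using $J^M_\infty \equiv M$ on $\Omega_M$. The remaining content is routine feasibility/concatenation bookkeeping.
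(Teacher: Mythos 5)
Your proof is correct and follows essentially the same route as the paper, which simply splits off the first stage cost and nests the minimization over $(\{u_k\}_{k\ge 1},T)$ to identify the inner value with $J^M_\infty(f(x,u_0))$. You merely make that principle-of-optimality step explicit via the two inequalities and the boundary convention $J^M_\infty\equiv M$ on $\Omega_M$, which is a more careful rendering of the paper's argument rather than a different one.
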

\begin{proof}
    The optimal cost of AC-OCP can be written as 
    \begin{align*}
        J^M_\infty(x) &= \min_{ \{u_k\}_{k=0}^{T-1}, T } \Big[ c(x, u_0) + \sum_{k=1}^{T-1} c(x_k, u_k) \\
        &+ \max(\phi(x_T), M) \Big].
    \end{align*}
    The above equation can also be written as:
    \begin{align*}
        J^M_\infty(x) &=  \min_{u_0}\Big[ c(x, u_0) + \min_{ \{u_k\}_{k=1}^{T-1}, T }[\sum_{k=1}^{T-1} c(x_k, u_k) \\
        &+ \max(\phi(x_T), M)] \Big],\\
        J^M_\infty(x) &=\min_{u_0}\Big[ c(x, u_0) + J^M_\infty(f(x,u_0)) \Big].
    \end{align*}
    The above can be shown for any initial state $x \notin \Omega_M$.
\end{proof}
\begin{corollary}
    If there exists some $M>0$ such that the set $\Omega_M$ is forward invariant for the uncontrolled dynamics or if we have knowledge of a policy $\pi(\cdot)$ that renders the closed loop invariant with respect to $\Omega_M$, $J_{\infty}^M(\cdot)$ is a CLF for the given system \eqref{eq:dynamics}, and its policy renders the set $\Omega_M$ asymptotically stable. 
\end{corollary}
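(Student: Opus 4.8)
The plan is to mirror the proof of Corollary~\ref{corollary.1} restated from Part~I, with $\Omega_M$ playing the role of the origin and with the policy $\pi(\cdot)$ of A\ref{assump.3 forward invariance} taking over once the trajectory is inside $\Omega_M$. The starting point is Lemma~\ref{lemma:Bellman equation}, which gives the Bellman equation $J^M_\infty(x) = \min_u\{c(x,u) + J^M_\infty(f(x,u))\}$ for every $x\notin\Omega_M$. First I would record two elementary facts about $J^M_\infty$. On $\Omega_M$ it equals $M$: the choice $T=0$ is feasible for \eqref{eq:ACOCP} and yields cost $\max(\phi(x),M)=M$, while every feasible cost is at least $M$ since the terminal term $\max(\phi(x_T),M)\ge M$ and $c\ge 0$; hence $J^M_\infty(x)=M$ on $\Omega_M$. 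Off $\Omega_M$, by Lemma~\ref{lemma:first hitting time} the optimal trajectory reaches $\Omega_M$ only at $T^\ast=T(\Omega_M)\ge 1$, so the cost contains at least the term $c(\tilde x_0,\tilde u_0)>\delta$ (using A\ref{assump.3 forward invariance}), giving $J^M_\infty(x)>M$ strictly. Thus $V(x)\coloneqq J^M_\infty(x)-M$ is nonnegative, vanishes exactly on $\Omega_M$, and is the natural Lyapunov candidate for the set $\Omega_M$.

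Next I would define the closed-loop policy $\mu(\cdot)$ by $\mu(x)\in\argmin_u\{c(x,u)+J^M_\infty(f(x,u))\}$ for $x\notin\Omega_M$ and $\mu(x)=\pi(x)$ for $x\in\Omega_M$, and verify the three ingredients of set asymptotic stability. \emph{Forward invariance of $\Omega_M$} is immediate from A\ref{assump.3 forward invariance}, since $f(x,\pi(x))\in\Omega_M$ for all $x\in\Omega_M$. \emph{Monotone decrease}: for $x\notin\Omega_M$ the Bellman equation gives $V(f(x,\mu(x)))=V(x)-c(x,\mu(x))$ with $c(x,\mu(x))>\delta>0$ (again A\ref{assump.3 forward invariance}), while for $x\in\Omega_M$, $V$ stays $0$; hence $V$ is non-increasing along closed-loop trajectories and drops by at least $\delta$ at every step spent outside $\Omega_M$. \emph{Attractivity}: since $V\ge 0$ and decreases by at least $\delta$ per step outside $\Omega_M$, the trajectory from any $x$ must enter $\Omega_M$ within at most $V(x)/\delta$ steps, and then stays there by forward invariance; this is the same telescoping/contradiction argument used in Lemma~\ref{lemma:finite time} and Corollary~\ref{corollary.1}.

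For Lyapunov stability I would run the standard sublevel-set argument: because $V$ is non-increasing under $\mu$, each sublevel set $S_\eta\coloneqq\{x:V(x)\le\eta\}$ is forward invariant, and — using continuity of $J^M_\infty$ (inherited from continuity of $f$, $c$, $\phi$ together with the free-final-time structure) and the strict positivity of $V$ off $\Omega_M$ — the family $\{S_\eta\}_{\eta>0}$ shrinks to $\Omega_M$ and forms a neighborhood basis of it, so every $\varepsilon$-neighborhood of $\Omega_M$ contains a forward-invariant $S_\eta$. Combining stability with attractivity and forward invariance gives asymptotic stability of $\Omega_M$ under $\mu$, and since at each state the guaranteed decrease of $J^M_\infty$ is realized by an admissible control (the greedy control outside $\Omega_M$, $\pi$ inside), $J^M_\infty$ is a CLF for \eqref{eq:dynamics} with respect to $\Omega_M$. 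I expect the main obstacle to be the regularity of $J^M_\infty$ near $\partial\Omega_M$ needed to make the sublevel sets a genuine neighborhood basis — i.e., the purely topological half of Lyapunov stability — since the decrease and attractivity half follows cleanly from Lemma~\ref{lemma:Bellman equation} and A\ref{assump.3 forward invariance}, and the remainder is a routine transcription of the Corollary~\ref{corollary.1} argument with $\Omega_M$ in place of the origin.
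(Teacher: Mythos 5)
Your proposal is correct and follows essentially the same route the paper intends: the corollary is stated without an explicit proof as an immediate consequence of Lemma~\ref{lemma:Bellman equation} together with the CLF reasoning of Corollary~\ref{corollary.1}, i.e., the Bellman equation supplies the one-step decrease $J^M_\infty(f(x,\mu(x))) = J^M_\infty(x) - c(x,\mu(x))$ with $c(x,\mu(x))>\delta$ outside $\Omega_M$, while A\ref{assump.3 forward invariance} (or uncontrolled invariance) handles the trajectory once inside the set. Your explicit construction of $V = J^M_\infty - M$, the greedy-plus-$\pi$ policy, and the sublevel-set stability argument merely fill in details the paper leaves implicit, including the regularity of $J^M_\infty$ near $\partial\Omega_M$ that you rightly flag and that the paper likewise does not address.
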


In the following theorem, we show that the cost of AC-OCP converges to the optimal IH-OCP cost as $M\rightarrow 0.$
\begin{theorem}\label{theorem:cost convergence}
    The AC-OCP cost $ J^M_\infty(x)$ converges to the IH-OCP cost $J^*_{\infty}(x)$ in the limit $M \rightarrow 0$, i.e., $$ \lim_{M\rightarrow 0 } J^M_\infty(x) = J^*_{\infty}(x),$$ assuming that $J^*_\infty(\cdot)$ is continuous at the origin.
\end{theorem}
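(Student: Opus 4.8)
The plan is a sandwich argument: I will establish $\limsup_{M\to 0} J^M_\infty(x)\le J^*_\infty(x)$ and $\liminf_{M\to 0} J^M_\infty(x)\ge J^*_\infty(x)$ separately, and then combine them.

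\textbf{Upper bound.} Fix $x$ and let $(x^*_k,u^*_k)$ be an optimal \eqref{eq:IHOCP} trajectory from $x_0=x$. By Corollary~\ref{corollary.1} the optimal closed loop renders the origin GAS, so $x^*_k\to 0$; since $\phi$ is continuous with $\phi(0)=0<M$, the sublevel set $\Omega_M$ contains an open neighbourhood of the origin, and hence $x^*_k$ enters $\Omega_M$ for the first time at some finite $\tau_M$. The truncated control $\{u^*_k\}_{k=0}^{\tau_M-1}$ with horizon $T=\tau_M$ is feasible for \eqref{eq:ACOCP}, and since $x^*_{\tau_M}\in\Omega_M$ we have $\max(\phi(x^*_{\tau_M}),M)=M$. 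Therefore, using $c\ge 0$ from A\ref{assump.1 cost},
\[
J^M_\infty(x)\;\le\;\sum_{k=0}^{\tau_M-1}c(x^*_k,u^*_k)+M\;\le\;\sum_{k=0}^{\infty}c(x^*_k,u^*_k)+M\;=\;J^*_\infty(x)+M,
\]
so letting $M\to 0$ gives $\limsup_{M\to0}J^M_\infty(x)\le J^*_\infty(x)$.

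\textbf{Lower bound.} Let $(\tilde x_k,\tilde u_k)$ with optimal horizon $T^*$ solve \eqref{eq:ACOCP}; by Lemma~\ref{lemma:first hitting time}, $\tilde x_{T^*}\in\Omega_M$, so $\max(\phi(\tilde x_{T^*}),M)=M$ and $J^M_\infty(x)=\sum_{k=0}^{T^*-1}c(\tilde x_k,\tilde u_k)+M$. Concatenating this finite control sequence with an optimal \eqref{eq:IHOCP} control from $\tilde x_{T^*}$ gives a feasible infinite-horizon trajectory from $x$, whence by optimality of $J^*_\infty$,
\[
J^*_\infty(x)\;\le\;\sum_{k=0}^{T^*-1}c(\tilde x_k,\tilde u_k)+J^*_\infty(\tilde x_{T^*})\;=\;\bigl(J^M_\infty(x)-M\bigr)+J^*_\infty(\tilde x_{T^*}),
\]
so $J^M_\infty(x)\ge J^*_\infty(x)-J^*_\infty(\tilde x_{T^*})$. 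As $M\to 0$, $\phi(\tilde x_{T^*})\le M\to 0$ forces $\tilde x_{T^*}\to 0$ (the sublevel sets $\Omega_M$ collapse to the origin), and by the assumed continuity of $J^*_\infty$ at the origin together with $J^*_\infty(0)=0$ (immediate from $f(0,0)=0$ and A\ref{assump.1 cost}) we get $J^*_\infty(\tilde x_{T^*})\to 0$. Hence $\liminf_{M\to0}J^M_\infty(x)\ge J^*_\infty(x)$, and combining with the upper bound proves $\lim_{M\to 0}J^M_\infty(x)=J^*_\infty(x)$.

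\textbf{Anticipated main obstacle.} The delicate step is $\phi(\tilde x_{T^*})\to 0 \Rightarrow \tilde x_{T^*}\to 0$: mere continuity and positive-definiteness of $\phi$ do not by themselves guarantee that the sublevel sets $\Omega_M$ shrink to $\{0\}$ (there could be a thin far-away region where $\phi$ stays small), so this requires a properness-type condition on $\phi$ near the origin, or restricting to a compact working domain — I would state this explicitly. Everything else is routine bookkeeping with the principle of optimality and nonnegativity of the stage cost; the explicit continuity hypothesis on $J^*_\infty$ at the origin is precisely what upgrades $\tilde x_{T^*}\to 0$ to $J^*_\infty(\tilde x_{T^*})\to 0$.
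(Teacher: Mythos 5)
Your proof is correct and takes essentially the same route as the paper: both inequalities are obtained by feeding each problem's optimal solution into the other as a feasible candidate (truncated at the first hitting time of $\Omega_M$, respectively concatenated with the optimal tail), then using the collapse of $\Omega_M$ and the continuity of $J^*_\infty$ (and $\phi$) at the origin. Your upper bound is a mild streamlining (bounding the truncated sum by $J^*_\infty(x)+M$ directly via $c\ge 0$, avoiding the terminal-cost limit the paper takes), and the properness caveat you flag for concluding $\phi(\tilde x_{T^*})\to 0 \Rightarrow \tilde x_{T^*}\to 0$ is a fair point that applies equally to the paper's own assertion that $\Omega_M$ shrinks to $\{0\}$.
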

\begin{proof}
    Let $(\tilde{x}_k, \tilde{u}_k)$ denote the solution to \eqref{eq:ACOCP}, and $(x^*_k, u^*_k)$ denote the solution to \eqref{eq:IHOCP}. Now, we compare the costs by applying the AC-OCP policy $\{\tilde{u}_k\}_{k=0}^{T(\Omega_M)}$ to the IH-OCP. Since the AC-OCP policy is only defined for $T(\Omega_M)$ steps, we assume $\tilde{u}_k = u^*_k$ for $k\geq T(\Omega_M)$ for the sake of argument, and the result still holds for any policy that renders the set $\Omega_M$ forward invariant for the system dynamics. Since $J^*_\infty(x)$ is the optimal for \eqref{eq:IHOCP}, the cost of any other policy satisfies,
    \begin{align}\label{eq:lemma4_1}
        J^*_\infty(x) \leq \sum_{k=0}^{T(\Omega_M) - 1} c(\tilde{x}_k, \tilde{u}_k) + \sum_{k=T(\Omega_M)}^{\infty} c(\tilde{x}_k, u^*_k).
    \end{align}
    Using the knowledge that $J^M_\infty(x) = \sum_{k=0}^{T(\Omega_M) - 1} c(\tilde{x}_k, \tilde{u}_k) + M$, and  $\sum_{k=T(\Omega_M)}^{\infty} c(\tilde{x}_k, u^*_k) = J^*_\infty(\tilde{x}_{T(\Omega_M)})$, we can write \eqref{eq:lemma4_1} as,
    \begin{align*}
        J^*_\infty(x) \leq J^M_\infty(x) - M + J^*_\infty(\tilde{x}_{T(\Omega_M)}).
    \end{align*}
    Restructuring the equation and taking the limit $M \rightarrow 0$ gives, 
    \begin{align*}
        \lim_{M\rightarrow 0 } (J^*_\infty(x) - J^M_\infty(x)) \leq \lim_{M\rightarrow 0 } (J^*_\infty(\tilde{x}_{T(\Omega_M)}) -M),
    \end{align*}
    where, $\lim_{M\rightarrow 0 } J^*_\infty(\tilde{x}_{T(\Omega_M)}) = J^*_\infty( \lim_{M\rightarrow 0 } \tilde{x}_{T(\Omega_M)})$, as $J^*_\infty(\cdot)$ is continuous at the origin. As $M\rightarrow 0, ~\Omega_M$ will shrink in size and in the limit, $\Omega_M= \{0\}$ (since only $x = 0$ satisfies the condition $\phi(x)\leq 0,$ and please note the distinction between the number 0 and the state space origin 0.) Hence, in the limit $\tilde{x}_{T(\Omega_M)} = 0$ due to the terminal state constraint $\tilde{x}_{T(\Omega_M)} \in \Omega_M$ in \eqref{eq:ACOCP}, which implies $\lim_{M\rightarrow 0 } J^*_\infty(\tilde{x}_{T(\Omega_M)}) = J^*_\infty(0) = 0$. Thus, 
    \begin{align}\label{eq:lemma4_ineq1}
        \lim_{M\rightarrow 0 } J^M_\infty(x) \geq J^*_\infty(x).
    \end{align}

    Similarly, we can show $\lim_{M\rightarrow 0 } (J^M_\infty(x) - J^*_\infty(x)) \leq 0$ by applying the IH-OCP policy $\{ u^*_k \}_{k=0}^{\infty}$ to \eqref{eq:ACOCP}. Due to the optimality of  $J^M_\infty(x)$, we get $J^M_\infty(x) \leq \sum_{k=0}^{T(\Omega_M) - 1} c(x^*_k, u^*_k) + \max(\phi(x^*_{T(\Omega_M)}, M)= J^*_\infty(x) - J^*_\infty(x^*_{T(\Omega_M)}) + \max(\phi(x^*_{T(\Omega_M)}, M)$.
    Rearranging and taking the limit gives,
    \begin{align*}
       \lim_{M\rightarrow 0 } \Big(J^M_\infty(x) -  J^*_\infty(x)\Big) &\leq \lim_{M\rightarrow 0 } \Big(- J^*_\infty(x^*_{T(\Omega_M)})\\
       &+ \max(\phi(x^*_{T(\Omega_M)}, M)\Big) 
    \end{align*}
    As shown previously $\lim_{M\rightarrow 0 } J^*_\infty(x^*_{T(\Omega_M)}) = 0$. If $\max(\phi(x^*_{T(\Omega_M)}, M) = \phi(x^*_{T(\Omega_M)})$, then $\lim_{M\rightarrow 0 } \phi(x^*_{T(\Omega_M)}) =  \phi(\lim_{M\rightarrow 0 } x^*_{T(\Omega_M)})$, since $\phi(\cdot)$ is also a continuous function. Using the similar argument used for $J^*_\infty(\cdot),$ we can say $\phi(\lim_{M\rightarrow 0 } x^*_{T(\Omega_M)})=0$. If $\max(\phi(x^*_{T(\Omega_M)}, M) = M$, it is trivial to show the limit is $0$. Hence, we get
    \begin{align} \label{eq:lemma4_ineq2}
       \lim_{M\rightarrow 0 } J^M_\infty(x)  &\leq   J^*_\infty(x) .
    \end{align}
    From \eqref{eq:lemma4_ineq1} and \eqref{eq:lemma4_ineq2}, we get $\lim_{M\rightarrow 0 } J^M_\infty(x) = J^*_\infty(x).$ 
\end{proof}

The intuition for the above proof is that as $M \rightarrow 0$, the set $\Omega_M$ shrinks in size and the state at the first hitting time $x_{T(\Omega_M)} \rightarrow 0$, in which case, the AC-OCP and IH-OCP become equivalent problems. 

\subsection{Discussion} \label{sec:discussion}
\paragraph{Why propose the AC-OCP?}

The purpose of proposing the AC-OCP is that it captures the essence of the IH-OCP in that the problem determines the transfer time, for which it has to be free, as opposed to fixed, as in FH-OCP. Moreover, the transfer time is not unique; it varies with the initial state in that different initial states would need different transfer times for optimal performance. Also, the AC-OCP construction helps us guarantee that the finite optimal time cost-to-go is a CLF that renders the set $\Omega_M$ globally asymptotically stable. 

\paragraph{How would one solve the AC-OCP?}  

We do not solve AC-OCP directly. The solution is given by solving \eqref{eq:FHOCP} by sweeping for different values of the time horizon $T$ until we find the time $T^*$, for which the solution enters the set $\Omega_M$, i.e., the terminal cost of the solution satisfies $\phi(x_T) \leq M$. This sweep of different values of $T$ can be done in parallel, and the search can be optimized to find the $T^*.$

\paragraph{How is this different from \cite{mohamed2023infinitehorizon}?}
In \cite{mohamed2023infinitehorizon}, the stationary optimal cost function, obtained by solving the stationary Riccati equation, was the obvious candidate for the terminal cost since it is an arbitrarily good approximation of the true optimal cost as the terminal set gets small. In lieu, in this work, because of the absence of linear controllability, we use the heuristic terminal cost $\phi(\cdot)$ which only has the property that $\phi(x) \rightarrow 0$ as $x \rightarrow 0$. The advantage in the linearly controllable case is that the terminal set for which the linear controller is a CLF/ good approximation can be quite large, thereby leading to a significant computational saving in solving the problem when compared to solving it without the terminal cost. As we shall show in our computational experiments, the heuristic terminal cost regularization is necessary to solve complex nonholonomic problems such as the fish and swimmer models.

\paragraph{Contrast with Nonlinear MPC}
Traditional nonlinear MPC has a fixed horizon $N$, and it replans over the same fixed horizon at every step to furnish a time-invariant control law \cite{mayne2000constrained}. This has the implication that the MPC policy only renders states that can be controlled to the terminal set in at most $N$ steps asymptotically stable, leading to a small region of attraction. In contrast, we solve the problem from any initial state over a free horizon and this is precisely what allows for the GAS nature of the resulting policy. Further, this obviates the need for replanning in our approach. Also, for a given initial state, the horizon $N$ that must be used to obtain the MPC control law is not clear. Using a heuristic horizon will lead to suboptimal results. Moreover, the horizon $N$ differs based on the initial state. These aspects will be shown in the numerical results. 

A drawback of solving AC-OCP  by sweeping through the time horizon to identify the transfer $T$ is that it is computationally more expensive than just solving MPC at one-time step. However, MPC requires replanning to ensure the system enters the terminal set which would also demand computational resources at every time step. While, the solution obtained from AC-OCP ensures the system enters the terminal set at the transfer time and does not require replanning. With current advances in computing resources, the sweep through time horizons is feasible to be done in real-time.   

\section{Solution to the Discounted Infinite Horizon Optimal Control Problem}\label{section:sol_DIHOCP}

Reinforcement learning problems for continuous control predominantly consider discounted infinite horizon problems \cite{lillicrap2015continuous}. In this section, we explore the discounted problem and show that we can use a finite horizon construction similar to the previous section. The discounted problem \cite{bertsekas_vol2} is defined as 
\begin{align}
    J^*_\infty(x) &=\min_{u_k}\sum_{k=0}^{\infty} \beta^k c(x_k,u_k),
\label{eq:D-IHOCP}\tag{D-IHOCP} \\
&\text{subject to:}  ~x_{k+1} = f(x_k, u_k), ~\text{and}~x_0 = x, \nonumber
\end{align}
where discount factor $\beta \in(0,1)$.

Similar to Section~\ref{section:sol_IHOCP}, we use an alternate construction with a free final time for the discounted finite horizon optimal control problem:
\begin{align}
    J_\infty^M(x) &=\min_{u_k, T}\sum_{k=0}^{T-1} \beta^k c(x_k,u_k)+\beta^T \max(\phi(x_k), M),
\label{eq:D-ACOCP}\tag{D-ACOCP} \\
&\text{subject to:}  ~x_{k+1} = f(x_k, u_k). \nonumber
\end{align}
where $\phi(\cdot)$ is some terminal cost function, $\Omega_M=\{x:\phi(x)\leq M\}$ and $M<\infty$ is some number.

We invoke assumptions A\ref{assump.1 cost}, A\ref{assump.2 controllability}, and A\ref{assump.3 forward invariance} as established in Section~\ref{section:sol_IHOCP} for the results below. We will prove results for the discounted problem analogous to the undiscounted case. First, we will show that given any initial set $\Omega^0$, there always exists a $\beta<1$ such that any $x_0\in \Omega^0$ may be controlled into the terminal set $\Omega_M$ (see Fig.~\ref{fig.discounted_illustration}).

\begin{figure}[!htbp]
    \centering
    \def\svgwidth{0.6\columnwidth}
    \import{plots/}{discounted_cost_illustration.pdf_tex}
    \caption{An illustration of the discounted cost problem. We will show that given any $\Omega^0$, there exists a $\beta<1$, s.t., any $x_0 \in \Omega^0$ may be controlled into the terminal set $\Omega_M$.} 
    \label{fig.discounted_illustration}
\end{figure}
\begin{lemma}
    Given any $x_0\in \Omega^0$, there exists a discount factor $\beta(x_0)<1$ such that, given sufficient time, the solution to \eqref{eq:D-ACOCP} will enter the set $\Omega_M$.
\end{lemma}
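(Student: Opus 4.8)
The plan is to argue by contradiction, paralleling the proof of Lemma~\ref{lemma:finite time} but now tracking the effect of the discount factor. Suppose that, no matter how long the problem is allowed to run, the solution $(\tilde{x}_k,\tilde{u}_k)$ of \eqref{eq:D-ACOCP} started at $x_0$ never enters $\Omega_M$. Then $\tilde{x}_k\notin\Omega_M$ for every $k$, so by A\ref{assump.3 forward invariance} the incremental cost satisfies $c(\tilde{x}_k,\tilde{u}_k)>\delta$ at every step. Consequently the discounted running cost accumulated over a horizon $T$ is bounded below by $\delta\sum_{k=0}^{T-1}\beta^k=\delta\,\frac{1-\beta^{T}}{1-\beta}$, which increases to $\frac{\delta}{1-\beta}$ as the allotted time grows; the nonnegative terminal term $\beta^T\max(\phi(\tilde{x}_T),M)$ only adds to this. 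Hence the cost of any solution that stays outside $\Omega_M$ is at least (arbitrarily close to) $\frac{\delta}{1-\beta}$.

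Next I would build a competing trajectory that does enter $\Omega_M$ and whose cost is bounded uniformly in $\beta$. By A\ref{assump.2 controllability} there is a control sequence $\{\bar{u}_k\}_{k=0}^{T(x_0)-1}$ steering $x_0$ to $\bar{x}_{T(x_0)}\in\partial\Omega_M$ in finite time, and stopping the free-time problem there gives cost $\sum_{k=0}^{T(x_0)-1}\beta^{k}c(\bar{x}_k,\bar{u}_k)+\beta^{T(x_0)}M\le \sum_{k=0}^{T(x_0)-1}c(\bar{x}_k,\bar{u}_k)+M$, since $\beta<1$; if one insists on running longer, A\ref{assump.3 forward invariance} supplies $\pi(\cdot)$ keeping the state in $\Omega_M$ thereafter, so the terminal cost never exceeds $M$ and the extra discounted running cost stays summable. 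Write $\bar{C}(x_0)$ for the undiscounted transfer cost $\sum_{k=0}^{T(x_0)-1}c(\bar{x}_k,\bar{u}_k)$, so the competing cost is at most $\bar{C}(x_0)+M$, a finite number independent of $\beta$. Now choose $\beta(x_0)<1$ so that $\frac{\delta}{1-\beta(x_0)}>\bar{C}(x_0)+M$, i.e. $\beta(x_0)=1-\frac{\delta}{\bar{C}(x_0)+M}$, which is a genuine point of $(0,1)$ because $\delta<\infty$. For any $\beta\ge\beta(x_0)$, once the horizon is large enough the never-entering solution costs strictly more than the competing one, contradicting optimality of \eqref{eq:D-ACOCP}; therefore the solution must enter $\Omega_M$ after finitely many steps. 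Taking the worst case over $x_0\in\Omega^0$ — which is how the companion figure phrases it — yields a single $\beta$ valid for the whole initial set, provided $\sup_{x_0\in\Omega^0}\bar{C}(x_0)<\infty$.

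The step I expect to be the main obstacle is precisely the one forcing $\beta$ near $1$: unlike the undiscounted case, where loitering outside $\Omega_M$ makes the cost diverge to $+\infty$ unconditionally, here the ``penalty'' for staying out saturates at $\frac{\delta}{1-\beta}$, so the contradiction materialises only when this saturated value beats the transfer cost $\bar{C}(x_0)+M$, which is exactly what $\beta\ge\beta(x_0)$ buys. Care is also needed in making ``given sufficient time'' precise: because \eqref{eq:D-ACOCP} optimises over a \emph{free} finite horizon, a solution that never reaches $\Omega_M$ would actually stop after boundedly many steps (one can bound $T^\ast$ via $\phi(x_0)\ge\delta\frac{1-\beta^{T^\ast}}{1-\beta}+\beta^{T^\ast}M$), so the argument is best phrased as: for $\beta\ge\beta(x_0)$ no finite stopping time that keeps the state outside $\Omega_M$ can be optimal, hence the optimal trajectory crosses into $\Omega_M$. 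A secondary technical point is the uniformity over $\Omega^0$, which needs the A\ref{assump.2 controllability} transfer cost to be bounded on $\Omega^0$; this boundedness requirement is the discounted analogue of why the terminal set is only locally, not globally, asymptotically stable.
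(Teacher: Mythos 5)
Your overall contradiction machinery is essentially the paper's: bound the discounted running cost of a trajectory that avoids $\Omega_M$ below by $\delta\sum_k\beta^k$, bound the cost of the A\ref{assump.2 controllability} transfer (with the A\ref{assump.3 forward invariance} policy and terminal value $M$) above by a quantity independent of $\beta$, and push $\beta$ toward $1$ so the former exceeds the latter; the paper does exactly this with $\beta^{\bar T}\bigl(\tfrac{1-\beta^{T-\bar T}}{1-\beta}\bigr)\delta$ versus $\bar J+\beta^T M$ and the divergence of $f(\beta,T)$. The genuine gap is in your final step, where you keep the horizon free. Your lower bound $\delta\tfrac{1-\beta^{T}}{1-\beta}$ is only large when the stay-outside trajectory runs for many steps; a trajectory that stops after very few steps outside $\Omega_M$ pays essentially $\max(\phi(x_0),M)=\phi(x_0)$, nowhere near $\delta/(1-\beta)$. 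Hence your claim that ``for $\beta\ge\beta(x_0)$ no finite stopping time that keeps the state outside $\Omega_M$ can be optimal'' does not follow from your estimates, and it is false in general: by A\ref{assump.3 forward invariance} the very first step taken from $x_0\notin\Omega_M$ already costs more than $\delta$, so every trajectory that eventually enters $\Omega_M$ costs more than $\delta$, whereas stopping immediately costs $\phi(x_0)$; whenever $M<\phi(x_0)\le\delta$ (nothing prevents $\delta(M)>M$), the free-final-time optimizer of \eqref{eq:D-ACOCP} stops outside $\Omega_M$ for every $\beta<1$. This is precisely why the lemma says ``given sufficient time'' and why the paper's proof contradicts optimality of $J_\infty^{M,T}$, the cost for a \emph{fixed, sufficiently long} horizon $T$ chosen jointly with $\beta$, rather than optimality of the free-time problem. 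Your argument, as it stands, proves that fixed-large-$T$ statement (which is what is intended), so the fix is to state the conclusion that way instead of asserting that the free-time solution cannot stop outside $\Omega_M$.

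A minor quantitative slip in the same step: with $\beta(x_0)=1-\delta/(\bar C(x_0)+M)$ you only obtain $\delta/(1-\beta(x_0))=\bar C(x_0)+M$, and since $\delta\tfrac{1-\beta^{T}}{1-\beta}<\delta/(1-\beta)$ for every finite $T$, no finite horizon produces the strict inequality at $\beta=\beta(x_0)$ itself; you need $\beta>\beta(x_0)$ (or explicit slack, e.g.\ exploiting that the discounted transfer cost is strictly below $\bar C(x_0)+M$) before selecting $T$. The uniformization over $x_0\in\Omega^0$ via $\sup_{x_0}\bar C(x_0)<\infty$ is fine and matches the paper's Assumption on $\bar T,\bar J$.
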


\begin{proof}
    We drop the explicit dependence on $x_0$ in the following for convenience. We do the proof by contradiction. Recall from A\ref{assump.2 controllability}; there exists a control sequence $\{\bar{u}_k \}_{k=0}^{T(x)-1}$ which enters $\Omega_M$ at some finite time $T(x)$. Let $T(x) = \bar{T}$ for the given initial state $x$. The trajectory is denoted by $(\bar{x}_k,\bar{u}_k)$, and let $\bar{J}$ be the cost associated with it, assuming that the policy $\pi(\cdot)$ is applied once the trajectory enters $\Omega_M$. Note that since $c^\pi(x)<\delta$, the tail cost is finite, i.e.,  $\sum_{k>\bar{T}}^{\infty} c^\pi(\Bar{x}_k)\beta^k<\infty$.

    Suppose that the solution to \eqref{eq:D-ACOCP} never enters $\Omega_M$. Then, the cost of the policy from $\bar{T}$ till some $T > \bar{T}$ is $\sum_{k=\bar{T}}^T \beta^k c(x_k,u_k) = \beta^{\bar{T}}\sum_{k=0}^{T-\bar{T}} \beta^k c(x_{k+\bar{T}},u_{k+\bar{T}}) \geq \beta^{\bar{T}}\sum_{k=0}^{T-\bar{T}} \beta^k \delta = \beta^{\bar{T}} \Bigg( \frac{1-\beta^{(T-\bar{T})}}{1-\beta} \Bigg) \delta.$

We now show that it is always possible to find a $\beta, T$ such that $\beta^{\bar{T}} \Big( \frac{1-\beta^{(T-\bar{T})}}{1-\beta} \Big) \delta>\bar{J}+\beta^T M$. 

The above implies that: $\Bigg( \frac{1-\beta^{(T-\bar{T})}}{1-\beta} \Bigg)>\frac{\bar{J}+\beta^T M}{\delta \beta^{\bar{T}}}$,
for some $(\beta, T)$. Now, consider the function $f(\beta,T)= \frac{1-\beta^{(T-\bar{T})}}{1-\beta}$.
This function is continuous in $(\beta,T)$, and $\lim_{T\rightarrow\infty,\beta\rightarrow 1}f(\beta,T)\rightarrow\infty$. By definition, this implies that $\exists~(\beta, T)$ s.t. $f(\beta,T)>\frac{\bar{J}+\beta^T M}{\delta \beta^{\bar{T}}}$. However, this implies that the \eqref{eq:D-ACOCP} optimal cost corresponding to the time $T$, say $J_\infty^{M,T}>\bar{J}+\beta^T M$, thereby contradicting the fact that $J_\infty^{M,T}$ is optimum. Note that $(\bar{J}+\beta^T M)$ is an upper bound on the cost of the nominal policy $\bar{u}_k$ with the terminal policy $\pi(\cdot)$. Thereby, this implies that the solution to the \eqref{eq:D-ACOCP} has to enter $\Omega_M$ for some finite time, given $\beta$ is sufficiently close to $1$.
\end{proof}

We assume the following to remove the dependence on $x_0$ for $\beta$.
\begin{assumption}\label{assump.d1}
    Let $\sup\limits_{x_0\in \Omega^0} \bar{T}(x_0)<\bar{T}<\infty$, and let $\sup\limits_{x_0\in \Omega^0} \bar{J}(x_0)<\bar{J}<\infty$. 
\end{assumption}
Then, if we choose $\beta, T$ s.t. $\Big( \frac{1-\beta^{(T-\bar{T})}}{1-\beta} \Big)>\frac{\bar{J}+\beta^T M}{\delta \beta^{\bar{T}}}$, then the solution to \eqref{eq:D-ACOCP} hits the set $\Omega_M$ in finite time for any $x_0\in \Omega^0$.
\begin{corollary}
    Under Assumption \ref{assump.d1}, there exists a finite $\beta<1$ s.t. the solution to the \eqref{eq:D-ACOCP} enters $\Omega_M$ in finite time.
\end{corollary}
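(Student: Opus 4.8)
The plan is to leverage the construction in the preceding Lemma together with the uniform bounds of Assumption~\ref{assump.d1} to exhibit a single discount factor that works for every $x_0 \in \Omega^0$. First I would recall what the Lemma's contradiction argument actually needs: applied to a fixed $x_0$, it goes through as soon as one can find a pair $(\beta, T)$ with $\beta < 1$ satisfying
$$\frac{1 - \beta^{T - \bar T(x_0)}}{1 - \beta} > \frac{\bar J(x_0) + \beta^T M}{\delta\,\beta^{\bar T(x_0)}},$$
since then the lower bound on the cost of any D-ACOCP trajectory that stays outside $\Omega_M$ through time $T$ strictly exceeds the upper bound $\bar J(x_0) + \beta^T M$ on the cost of the nominal ``enter-then-$\pi$'' policy, contradicting optimality of $J_\infty^{M,T}$. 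So it suffices to produce one $(\beta, T)$, with $\beta < 1$, for which this inequality holds simultaneously for all $x_0 \in \Omega^0$.

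Next I would pass to the uniform quantities $\bar T$ and $\bar J$ furnished by Assumption~\ref{assump.d1}. The key bookkeeping observation is that for $\beta \in (0,1)$ the map $s \mapsto \frac{1 - \beta^{T-s}}{1-\beta}$ is nonincreasing in $s$ and $s \mapsto \beta^{s}$ is nonincreasing in $s$; hence replacing $\bar T(x_0) \le \bar T$ and $\bar J(x_0) \le \bar J$ can only shrink the left-hand side and enlarge the right-hand side of the displayed inequality. Consequently it is enough to satisfy the single, $x_0$-free condition
$$\frac{1 - \beta^{T - \bar T}}{1 - \beta} > \frac{\bar J + \beta^T M}{\delta\,\beta^{\bar T}},$$
and this automatically implies the per-$x_0$ version for every $x_0 \in \Omega^0$, so the same $\beta$ drives the Lemma's contradiction uniformly.

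Then I would verify that such a $(\beta, T)$ exists with $\beta < 1$. Fix $T$ large enough that $T - \bar T > (\bar J + M)/\delta + 2$. As $\beta \to 1^-$ with $T$ held fixed, the left-hand side equals the finite geometric sum $\sum_{j=0}^{T - \bar T - 1}\beta^j$ and tends to $T - \bar T$, while the right-hand side tends to $(\bar J + M)/\delta$; both limits are finite and the former is strictly larger by our choice of $T$. By continuity in $\beta$ there is therefore $\beta_0 < 1$ such that the displayed strict inequality holds for all $\beta \in (\beta_0, 1)$; picking any such $\beta$ gives the desired finite discount factor, and invoking the Lemma's argument verbatim (now uniformly over $x_0 \in \Omega^0$) shows the solution to \eqref{eq:D-ACOCP} enters $\Omega_M$ in finite time from every $x_0 \in \Omega^0$.

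I expect the only genuine subtlety — the one step worth writing out carefully rather than dismissing — to be the monotonicity argument that collapses the whole family of per-$x_0$ inequalities down to the single uniform one; the existence of $(\beta, T)$ itself is an elementary limit computation once one commits to choosing $T$ first and $\beta$ afterwards.
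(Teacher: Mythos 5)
Your proposal is correct and follows essentially the same route the paper intends: the corollary is obtained by replacing the state-dependent quantities $\bar T(x_0), \bar J(x_0)$ with the uniform bounds of Assumption~\ref{assump.d1} and then invoking the Lemma's contradiction argument with a $(\beta,T)$ satisfying the single uniform inequality $\frac{1-\beta^{T-\bar T}}{1-\beta} > \frac{\bar J + \beta^T M}{\delta\beta^{\bar T}}$. Your write-up is in fact slightly more careful than the paper's, since you make the monotonicity step (uniform inequality implies each per-$x_0$ inequality) explicit and you fix $T$ first and let $\beta \to 1^-$, which avoids the ambiguity of the paper's joint limit $\lim_{T\to\infty,\beta\to 1} f(\beta,T)\to\infty$.
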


Now, we will show that the solution to  \eqref{eq:D-ACOCP} gives the first hitting time of the set $\Omega_M$.
\begin{lemma}
    The optimal time $T^*$ for the \eqref{eq:D-ACOCP} is the first hitting time of the set $\Omega_M$, i.e., $T^*=T(\Omega_M)$.
\end{lemma}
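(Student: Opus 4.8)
The plan is to mirror the two–case argument of Lemma~\ref{lemma:first hitting time}, carrying along the extra bookkeeping forced by the discount factor $\beta^{T}$ multiplying the terminal cost. Denote the \eqref{eq:D-ACOCP} optimizer by $(\tilde x_k,\tilde u_k)$ with stopping time $T^*$; since $\tilde x_{T^*}\in\Omega_M$ gives $\max(\phi(\tilde x_{T^*}),M)=M$, write
\[
J^M_\infty(x)=\sum_{k=0}^{T^*-1}\beta^k c(\tilde x_k,\tilde u_k)+\beta^{T^*}M .
\]
First I would dispose of $T^*<T(\Omega_M)$: by definition of the first hitting time $\tilde x_T\notin\Omega_M$ for every $T<T(\Omega_M)$, so such a $T$ is infeasible for \eqref{eq:D-ACOCP}, whence $T^*\ge T(\Omega_M)$.

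For the remaining case $T^*>T(\Omega_M)$ I would compare $J^M_\infty(x)$ against the feasible truncated plan that applies $(\tilde u_k)_{k=0}^{T(\Omega_M)-1}$ and stops at $T(\Omega_M)$ — feasible because the trajectory lies in $\Omega_M$ at its first hitting time — whose cost is $\sum_{k=0}^{T(\Omega_M)-1}\beta^k c(\tilde x_k,\tilde u_k)+\beta^{T(\Omega_M)}M$. Subtracting, and using $\beta^{T(\Omega_M)}-\beta^{T^*}=(1-\beta)\sum_{k=T(\Omega_M)}^{T^*-1}\beta^k$, the difference reduces to
\[
J^M_\infty(x)-\Big(\sum_{k=0}^{T(\Omega_M)-1}\beta^k c(\tilde x_k,\tilde u_k)+\beta^{T(\Omega_M)}M\Big)=\sum_{k=T(\Omega_M)}^{T^*-1}\beta^k\big(c(\tilde x_k,\tilde u_k)-(1-\beta)M\big).
\]
If this is strictly positive it contradicts the optimality of $J^M_\infty(x)$, forcing $T^*=T(\Omega_M)$. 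So the crux is to certify $c(\tilde x_k,\tilde u_k)\ge(1-\beta)M$, with strict inequality somewhere, on the post-hitting segment $k\in\{T(\Omega_M),\dots,T^*-1\}$; I would obtain this from Assumption~A\ref{assump.3 forward invariance} (the incremental cost is bounded below when the state is outside $\Omega_M$) together with the regime $\beta\to 1$ already isolated in the preceding corollary, which makes $(1-\beta)M$ small.

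\textbf{Main obstacle.} The delicate point is precisely this last step. Unlike the undiscounted case, extending the horizon is not free: pushing $T$ past the first hitting time shrinks the terminal term $\beta^{T}M$, so there is a standing incentive to wait, and only the accumulated running cost can defeat it. If between $T(\Omega_M)$ and $T^*$ the trajectory were allowed to linger deep inside $\Omega_M$ near the origin, where $c$ can be made arbitrarily small, the comparison above could fail; ruling this out requires the quantitative link between $\delta(M)$, $M$ and $\beta$ (essentially $\delta(M)\ge(1-\beta)M$, consistent with Assumption~\ref{assump.d1} and the corollary demanding $\beta$ sufficiently close to $1$), and I would make that dependence explicit in the hypotheses if it is not already intended.
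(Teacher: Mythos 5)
Your two--case structure is exactly the route the paper intends: its entire proof of this lemma is the single sentence ``the proof is identical to the proof of Lemma~\ref{lemma:first hitting time}.'' Your treatment of $T^*<T(\Omega_M)$ matches that argument verbatim. The value of your write-up is that you correctly show the discounted case is \emph{not} literally identical: in the case $T^*>T(\Omega_M)$ the truncation comparison no longer yields an immediate contradiction, because deferring the stop from $T(\Omega_M)$ to $T^*$ shrinks the terminal term from $\beta^{T(\Omega_M)}M$ to $\beta^{T^*}M$, and the surplus you compute, $\sum_{k=T(\Omega_M)}^{T^*-1}\beta^k\bigl(c(\tilde x_k,\tilde u_k)-(1-\beta)M\bigr)$, is only guaranteed positive if $c\ge(1-\beta)M$ along the deferred segment. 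The paper's argument for the undiscounted Lemma~\ref{lemma:first hitting time} uses only $c>0$, and when it asserts the discounted proof is identical it silently treats the terminal weight as if it were undiscounted.

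The obstacle you flag is therefore a genuine gap, but it is a gap in the paper's lemma as stated rather than a defect peculiar to your proposal: the stated assumptions do not supply the needed lower bound. Outside $\Omega_M$ one only has $c>\delta(M)$, and nothing forces $\delta(M)\ge(1-\beta)M$; worse, inside $\Omega_M$ (which is where the trajectory sits after its first entry) A\ref{assump.3 forward invariance} gives only the upper bound $c^\pi\le\delta$, and near the origin $c$ vanishes by A\ref{assump.1 cost}. So if the system can loiter inside $\Omega_M$ at per-step cost below $(1-\beta)M$, postponing the stop strictly lowers the \eqref{eq:D-ACOCP} cost and $T^*\ne T(\Omega_M)$ (in the extreme case of an exactly reachable cost-free equilibrium, the infimum over $T$ is not even attained). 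Closing this requires an added quantitative hypothesis such as $\delta(M)\ge(1-\beta)M$ \emph{together with} an argument that the optimizer does not re-enter the low-cost interior before stopping, or else redefining the stopping rule so that the problem terminates at first entry into $\Omega_M$ by construction. Your proposal is honest about not certifying this step; neither your sketch nor the paper's proof-by-reference establishes the lemma as stated, and making the extra condition explicit, as you suggest, is the right fix.
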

\begin{proof}
    The proof is identical to the proof of Lemma~\ref{lemma:first hitting time}.
\end{proof}  

Now, we show that the cost-to-go of the alternate construction will satisfy the discounted Bellman equation.
\begin{lemma}
    The cost-to-go of \eqref{eq:D-ACOCP} $J_\infty^M(x)$ satisfies the discounted Bellman equation for all initial states $x\notin \Omega_M$.
\end{lemma}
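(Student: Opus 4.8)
The plan is to mirror the proof of Lemma~\ref{lemma:Bellman equation}, the only new ingredient being the bookkeeping of the discount factors under a one-step time shift. Starting from the definition of $J^M_\infty(x)$ I would peel off the $k=0$ term of the running cost,
\[
J^M_\infty(x)=\min_{\{u_k\}_{k=0}^{T-1},\,T}\Big[\,c(x,u_0)+\sum_{k=1}^{T-1}\beta^k c(x_k,u_k)+\beta^T\max(\phi(x_T),M)\,\Big].
\]
Since $x\notin\Omega_M$, the terminal constraint $x_T\in\Omega_M$ (equivalently, the first-hitting-time characterization established just above) forces $T\ge 1$, so peeling off $u_0$ is legitimate.

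Next I would factor a single $\beta$ out of every term that survives the shift. With $j=k-1$ one has $\sum_{k=1}^{T-1}\beta^k c(x_k,u_k)=\beta\sum_{j=0}^{(T-1)-1}\beta^j c(x_{j+1},u_{j+1})$ and $\beta^T\max(\phi(x_T),M)=\beta\cdot\beta^{\,T-1}\max(\phi(x_T),M)$. Setting $x'_j:=x_{j+1}$ and $u'_j:=u_{j+1}$, the pair $(x'_j,u'_j)$ satisfies the same dynamics with $x'_0=f(x,u_0)$, so the bracketed tail is exactly $\beta$ times a D-ACOCP objective of horizon $T':=T-1$ started at $f(x,u_0)$.

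I would then invoke the principle of optimality: $u_0$ enters the tail only through $x_1=f(x,u_0)$, so the joint minimization over $(\{u_k\}_{k=1}^{T-1},T)$ decouples from $u_0$ and equals $\beta\,J^M_\infty(f(x,u_0))$, the inner free final time $T'$ ranging over all admissible horizons exactly as in the original D-ACOCP. This yields
\[
J^M_\infty(x)=\min_{u_0}\big\{\,c(x,u_0)+\beta\,J^M_\infty(f(x,u_0))\,\big\},
\]
the discounted Bellman equation, valid for every $x\notin\Omega_M$.

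The one point I would be careful about is the free final time: I must check that minimizing over $T\ge 1$ in the outer problem and over $T'=T-1\ge 0$ in the shifted problem are in bijection, and that no feasibility is lost at the boundary case $f(x,u_0)\in\Omega_M$, where $T'=0$ is optimal for the inner problem and corresponds to $T=1$ in the outer one. Everything else is the discounted analogue of the algebra already carried out in Lemma~\ref{lemma:Bellman equation}, so I do not expect any genuine obstacle beyond this bookkeeping.
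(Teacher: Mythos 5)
Your proposal is correct and follows essentially the same route as the paper: peel off the $k=0$ cost, factor $\beta$ out of the shifted tail (running cost and discounted terminal term alike), and identify the resulting inner minimization over $(\{u_k\}_{k\ge 1},T)$ with $\beta\,J^M_\infty(f(x,u_0))$ via the principle of optimality. Your extra bookkeeping on the horizon shift ($T\ge 1$ outer versus $T'=T-1\ge 0$ inner, justified by $x\notin\Omega_M$) is a welcome refinement but does not change the argument.
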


\begin{proof}
    The optimal cost of \eqref{eq:D-ACOCP} can be written as 
    \begin{align*}
        J^M_\infty(x) &= \min_{ \{u_k\}_{k=0}^{T-1}, T } \Big[ \beta^0 c(x, u_0) + \sum_{k=1}^{T-1} \beta^k c(x_k, u_k) \\
        &+ \beta^T\max(\phi(x_T), M) \Big].
    \end{align*}
    The above equation can also be written as:
    \begin{align*}
        J^M_\infty(x) &=  \min_{u_0}\Big[ c(x, u_0) +  \min_{ \{u_k\}_{k=1}^{T-1}, T }\beta[\sum_{k=1}^{T-1} \beta^{k-1}c(x_k, u_k) \\
        &+ \beta^{T-1}\max(\phi(x_T), M)] \Big],\\
        J^M_\infty(x) &=\min_{u_0}\Big[ c(x, u_0) + \beta J^M_\infty(f(x,u_0)) \Big].
    \end{align*}
    The above can be shown for any initial state $x \notin \Omega_M$ and time step $k$. 
\end{proof}
\begin{remark}
    Note that $\beta<1$ only for some $\Omega^0 \subset \mathcal{X}$, and thus, the discounted policy cannot be globally asymptotically stable.
\end{remark}

Finally, we will show that the cost-to-go of the alternate discounted problem converges to the optimum cost of D-IHOCP.
\begin{theorem}
    The D-ACOCP cost $ J^M_\infty(x)$ converges to the discounted infinite-horizon OCP cost $J^*_{\infty}(x)$ in the limit $M \rightarrow 0$, i.e., $$ \lim_{M\rightarrow 0 } J^M_\infty(x) \rightarrow J^*_{\infty}(x).$$
\end{theorem}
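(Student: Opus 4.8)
The plan is to mirror the proof of Theorem~\ref{theorem:cost convergence}, carrying the discount factors through, and to establish the two inequalities $\liminf_{M\to 0} J^M_\infty(x) \ge J^*_\infty(x)$ and $\limsup_{M\to 0} J^M_\infty(x) \le J^*_\infty(x)$ separately. Throughout I assume, as in the undiscounted theorem, that $J^*_\infty(\cdot)$ (and $\phi(\cdot)$) is continuous at the origin, and I work in the regime where the preceding lemmas apply, so that $T(\Omega_M) = T^*$, the first hitting time of $\Omega_M$, is finite. Denote the \eqref{eq:D-ACOCP} solution by $(\tilde x_k, \tilde u_k)$ and the \eqref{eq:D-IHOCP} solution by $(x^*_k, u^*_k)$; by the first-hitting-time lemma, $J^M_\infty(x) = \sum_{k=0}^{T(\Omega_M)-1}\beta^k c(\tilde x_k,\tilde u_k) + \beta^{T(\Omega_M)} M$.

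For the lower bound, I feed the \eqref{eq:D-ACOCP} policy into \eqref{eq:D-IHOCP}: apply $\{\tilde u_k\}_{k=0}^{T(\Omega_M)-1}$ and then continue from $\tilde x_{T(\Omega_M)}$ with the \eqref{eq:D-IHOCP}-optimal policy (any policy rendering $\Omega_M$ forward invariant also works, using $c^\pi \le \delta$ from A\ref{assump.3 forward invariance}). Optimality of $J^*_\infty$ together with the tail decomposition $\sum_{k=T(\Omega_M)}^\infty \beta^k c(\tilde x_k, u^*_k) = \beta^{T(\Omega_M)} J^*_\infty(\tilde x_{T(\Omega_M)})$ gives
\begin{align*}
    J^*_\infty(x) \le J^M_\infty(x) - \beta^{T(\Omega_M)} M + \beta^{T(\Omega_M)} J^*_\infty(\tilde x_{T(\Omega_M)}),
\end{align*}
so $J^*_\infty(x) - J^M_\infty(x) \le \beta^{T(\Omega_M)}\bigl(J^*_\infty(\tilde x_{T(\Omega_M)}) - M\bigr)$. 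As $M \to 0$, $\Omega_M \downarrow \{0\}$ forces $\tilde x_{T(\Omega_M)} \to 0$ by the terminal constraint, hence $J^*_\infty(\tilde x_{T(\Omega_M)}) \to J^*_\infty(0) = 0$ by continuity; since $\beta^{T(\Omega_M)} \le 1$, both error terms vanish, giving $\liminf_{M\to 0} J^M_\infty(x) \ge J^*_\infty(x)$.

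For the upper bound, I feed the \eqref{eq:D-IHOCP}-optimal policy into \eqref{eq:D-ACOCP}: run $u^*$ until it first enters $\Omega_M$, at time $T_M$, which is a feasible choice for \eqref{eq:D-ACOCP}. Optimality of $J^M_\infty$ together with the principle-of-optimality identity $\sum_{k=0}^{T_M-1}\beta^k c(x^*_k,u^*_k) = J^*_\infty(x) - \beta^{T_M} J^*_\infty(x^*_{T_M})$ gives
\begin{align*}
    J^M_\infty(x) - J^*_\infty(x) \le \beta^{T_M}\bigl(\max(\phi(x^*_{T_M}), M) - J^*_\infty(x^*_{T_M})\bigr),
\end{align*}
and as $M \to 0$ the right-hand side vanishes: $x^*_{T_M} \to 0$ forces $\phi(x^*_{T_M}) \to 0$ and $J^*_\infty(x^*_{T_M}) \to 0$ by continuity, $M \to 0$, and $\beta^{T_M}\le 1$. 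Combining with the lower bound yields $\lim_{M\to 0}J^M_\infty(x) = J^*_\infty(x)$.

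The main obstacle is the feasibility step used in the upper bound, namely that the \eqref{eq:D-IHOCP}-optimal trajectory actually enters $\Omega_M$ in finite time $T_M$. In the undiscounted case this was automatic because finiteness of $J^*_\infty(x)$ and positivity of $c$ away from the origin force $x^*_k \to 0$, but a discounted optimal trajectory need not converge to the origin. I would handle this by restricting to the class of initial states and discount factors for which the preceding lemma and corollary already guarantee entry into $\Omega_M$ (i.e. $x \in \Omega^0$ with $\beta$ chosen accordingly), or, alternatively, by invoking continuity of $J^*_\infty$ near the origin to splice an $\epsilon$-optimal policy with the forward-invariance policy $\pi(\cdot)$ into a feasible \eqref{eq:D-ACOCP} trajectory whose cost is within $\epsilon$ of $J^*_\infty(x)$. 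A secondary point worth noting is that $T(\Omega_M)$ may diverge as $M\to 0$, so $\beta^{T(\Omega_M)} \to 0$; this is benign since it only shrinks the error terms, but it means the bookkeeping must stay at the level of total costs, as above, rather than the individual transfer-cost terms.
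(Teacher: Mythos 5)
Your argument follows the same route as the paper: the paper's own proof of this theorem is a one-line deferral to Theorem~\ref{theorem:cost convergence}, and your two-sided bound with the discount factors carried through is exactly that intended adaptation. The lower-bound half (feeding the \eqref{eq:D-ACOCP} solution into \eqref{eq:D-IHOCP}, using the first-hitting-time characterization, the terminal constraint $\tilde x_{T(\Omega_M)}\in\Omega_M$, and continuity of $J^*_\infty$ at the origin) is correct as written, and your bookkeeping with the factors $\beta^{T(\Omega_M)}\le 1$ is the right way to handle a possibly diverging hitting time.

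The place where ``essentially identical'' breaks down is exactly the obstacle you flagged, but your proposed repairs do not yet close it. In the undiscounted case the upper bound is legitimate because finiteness of $J^*_\infty(x)$ together with $c>\delta$ outside $\Omega_M$ forces the optimal trajectory into $\Omega_M$; with discounting, a trajectory that never approaches the origin can still have finite cost, so the hitting time $T_M$ of the \eqref{eq:D-IHOCP}-optimal trajectory need not exist, and for fixed $\beta$ entering $\Omega_M$ only gets harder as $M\to 0$. Restricting to $x_0\in\Omega^0$ with $\beta$ chosen as in the preceding lemma and corollary does not resolve this, because those results guarantee that the \eqref{eq:D-ACOCP} solution enters $\Omega_M$, not that the \eqref{eq:D-IHOCP}-optimal (or an $\epsilon$-optimal) trajectory does; likewise the splicing construction presupposes that the $\epsilon$-optimal discounted trajectory reaches $\Omega_M$ for every small $M$, i.e., that it approaches the origin, which is precisely what discounting fails to ensure (cf. the paper's own remark that the discounted policy is not globally asymptotically stable). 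There is also a uniformity issue touching both bounds: the $\beta$ supplied by the lemma depends on $\Omega_M$ through $\bar T$ and $\bar J$, so for a fixed $\beta$ the first-hitting-time characterization may itself fail in the limit $M\to 0$. Making the theorem airtight requires an additional hypothesis, e.g., that the optimal (or a near-optimal) discounted trajectory from $x$ converges to the origin, or that $\beta$ can be chosen close enough to $1$ uniformly in $M$; the paper does not state such a hypothesis either, so your proof is as complete as the paper's, but the gap you identified is genuine and remains open in both.
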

\begin{proof}
    The proof is essentially identical to the undiscounted case discussed in Theorem~\ref{theorem:cost convergence}. 
\end{proof}

\begin{remark}
    Note that finding the right $\beta$ given some set $\Omega^0$ is practically infeasible, as the requisite $\bar{T}$, $\bar{J}$ etc., are unknown in general. Thus, the above result is strictly an existence result, and has no practical way of implementation. In practice, the process is reversed: we choose a $\beta$ and such a choice may have a small region of attraction $\Omega^0$ resulting in policies that are myopic.
\end{remark}
\section{Empirical Results}\label{section:results}
In this section, we present the empirical results. The proposed theory is extended to a Car-like robot (4 states, 2 inputs) and the MuJoCo-based simulator for the Fish robot (27 states, 6 inputs). We show that the cost converges as the horizon or the transfer time is increased. We also show the dependence of the transfer time on the initial conditions. In Section~\ref{sec:mpc}, we compare our approach with MPC. We only show experiments for the undiscounted case here due to the paucity of space. 

\subsection{System Description}
The Car-like robot has well-established nonlinear dynamics and is simulated in MATLAB. The dynamics is given by $\dot{p_x} = v~cos\theta$ , $\dot{p_y} = v ~sin\theta $, $\dot{\theta} = v ~tan(\delta)/L$, $\dot{v} = a$, where the control inputs are the acceleration $a$ and steering angle $\delta$ and the state is $x = [p_x, p_y, \theta, v]$.
Given an initial condition $x_0$, the task is to drive the system to the desired terminal state $x_T$. Similarly, the Fish-robot also starts at the origin, with the target coordinates for its head specified. The initial and final states of the fish model can be observed in Fig.~\ref{figinit}. All these systems are nonholonomic, and thus, the terminal cost cannot be the cost-to-go of the linearized system as proposed in our previous work \cite{mohamed2023infinitehorizon}, and we chose a heuristic cost in this case. The heuristic terminal cost used in the experiments is a quadratic cost on the state error, i.e. $\phi(x) = \frac{1}{2} x^T S x$, where $S$ is a diagonal matrix with non-negative entries. 

\begin{figure}[!htbp]
\centering
      \subfloat{\includegraphics[width=0.3\linewidth]{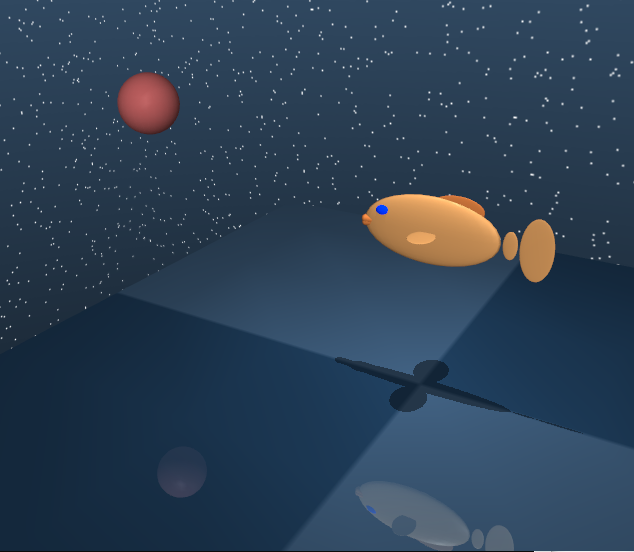}}
       \subfloat{\includegraphics[width=0.3\linewidth]{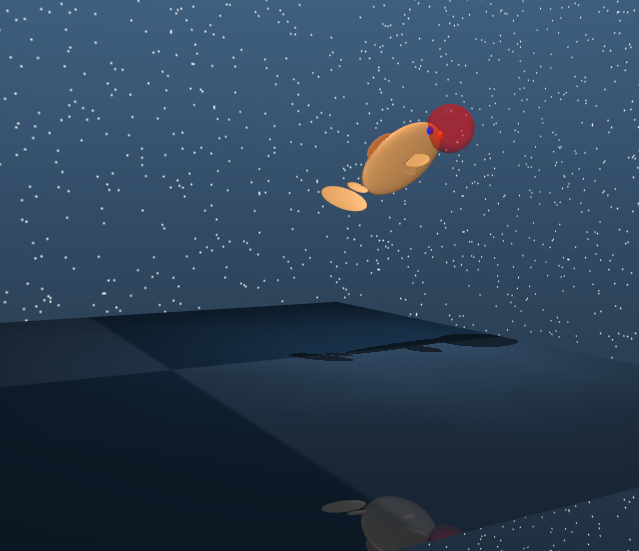}}
\caption{Fish Model simulated in MuJoCo in their initial and final states.}
\label{figinit}
\vspace{-0.5cm}
\end{figure}

\begin{figure}[!htbp]
\centering
    \sbox0{\subfloat[Car-like - Total cost - Case 1]{\includegraphics[width=0.48\linewidth]{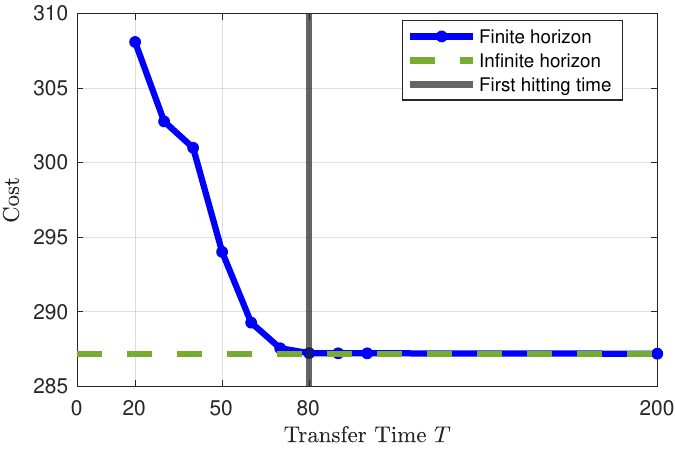}}}
    \sbox1{\subfloat[Car-like - Terminal cost - Case 1]{\includegraphics[width=0.48\linewidth]{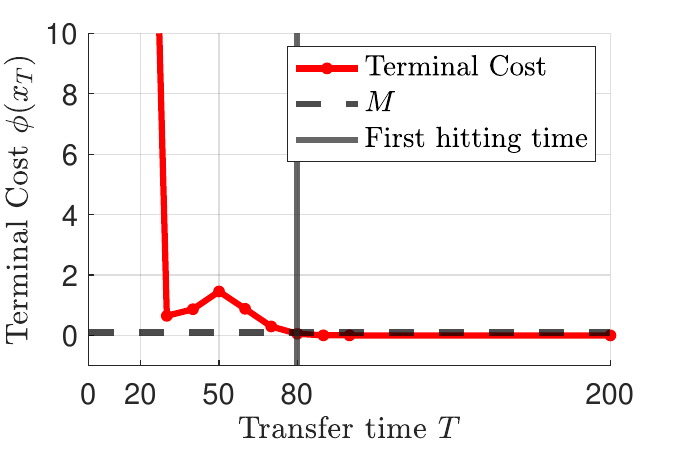}}}
    \sbox2{\subfloat[Car-like - Total cost - Case 2]{\includegraphics[width=0.48\linewidth]{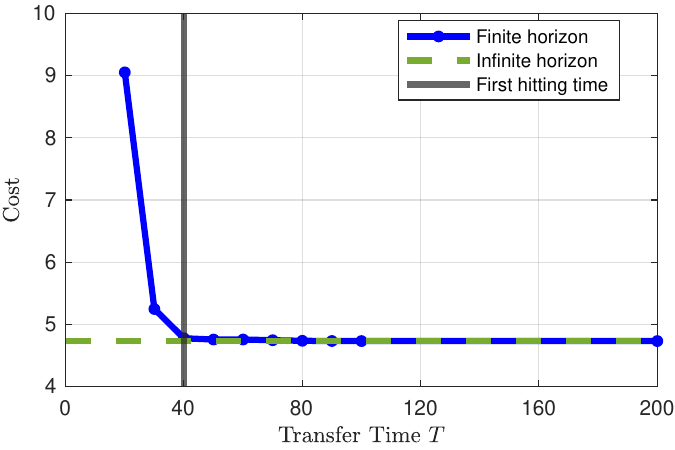}}}
    \sbox3{\subfloat[Car-like - Terminal cost - Case 2]{\includegraphics[width=0.48\linewidth]{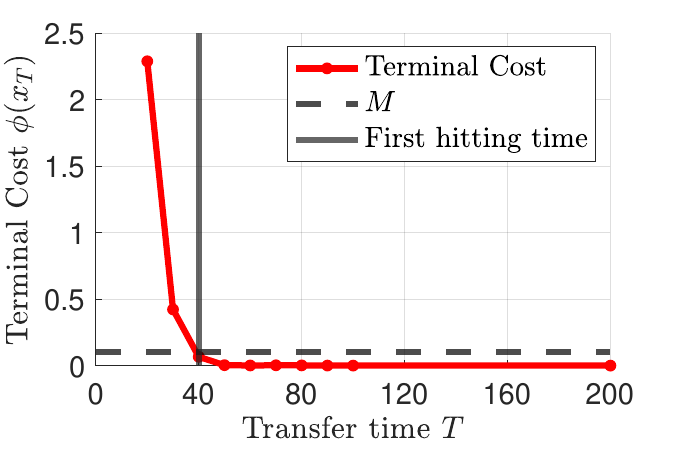}}}

    \sbox4{\subfloat[Fish - Total cost - Case 1]{\includegraphics[width=0.48\linewidth]{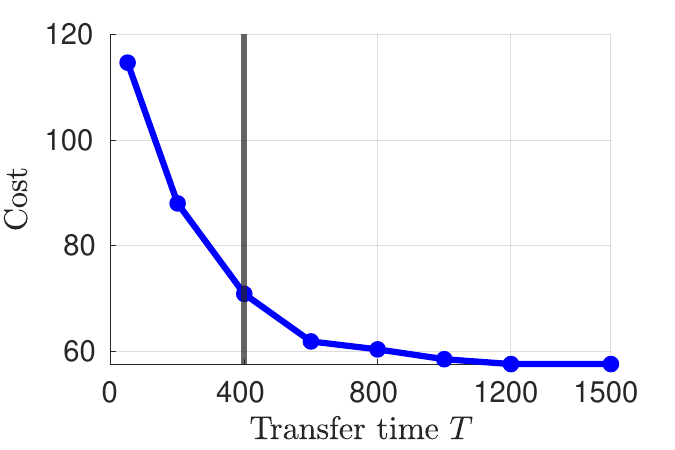}}}
    \sbox5{\subfloat[Fish - Terminal cost - Case 1]{\includegraphics[width=0.48\linewidth]{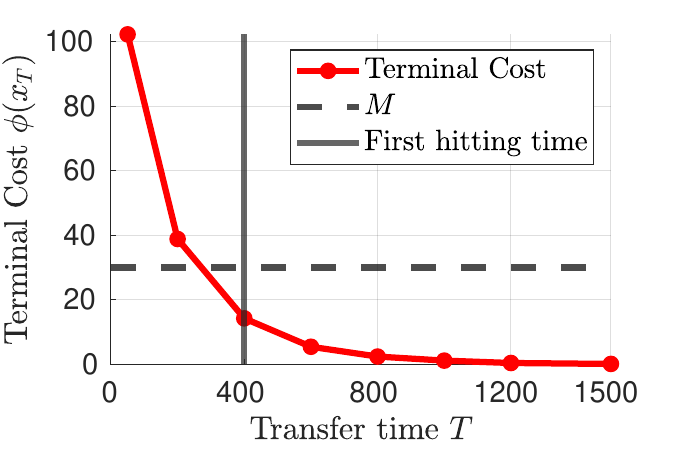}}}
    \sbox6{\subfloat[Fish - Total cost - Case 2]{\includegraphics[width=0.48\linewidth]{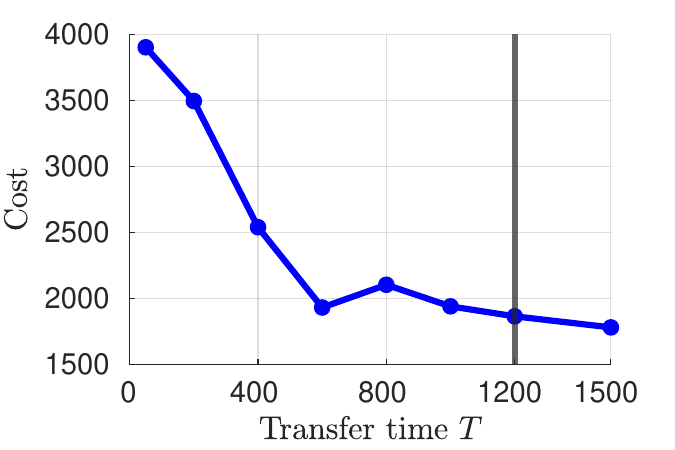}}}
    \sbox7{\subfloat[Fish - Terminal cost - Case 2]{\includegraphics[width=0.48\linewidth]{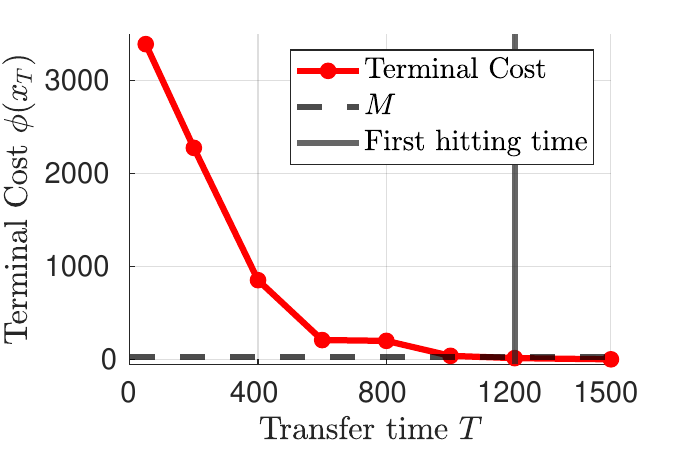}}}
    \centering
    \usebox0\hfil \usebox1\par
  \usebox2\hfil \usebox3\par
  \usebox4\hfil \usebox5\par
  \usebox6\hfil \usebox7\par
    \caption{Results for the car-like robot (a)-(d), and the fish model (e)-(h) with two different initial conditions - labeled Case 1 and 2. The parameters of the simulation are shown in Table~\ref{tab:experiments}. It is observed that different initial conditions correspond to different transfer times in both cases.}
    \label{fig:fish}
\end{figure}
\subsection{Optimality of AC-OCP and computing the transfer time}
To empirically verify our proposed approach, we need to show that the cost of the AC-OCP converges to the infinite horizon optimal cost. We use the iterative Linear Quadratic Regulator (iLQR) algorithm \cite{ILQG_tassa2012synthesis} to solve for the nonlinear optimal control and its corresponding optimal cost. For smooth nonlinear systems with control affine dynamics and a quadratic control cost, it can be shown that the iLQR algorithm will converge to the unique global optimum \cite{arxiv_D2C2.0} for a sufficiently small time discretization, thus circumventing the issue of multiple local minima. 

The iLQR incremental cost parameters and the terminal cost $\phi(x_T)$  are suitably chosen, and the optimization problem is set up with a finite horizon or `transfer time' $T$. Next, we sweep the transfer time $T$, until and beyond the first-hitting time $T(\Omega_M)$, where $x_{T(\Omega_M)} \in \Omega_M$. A small value is chosen for $M$, such that the terminal cost is arbitrarily close to zero, \textit{i.e.} the system is very close to the target state, and thus, the set $\Omega_M$ is forward invariant for all practical purposes. Since we do not have the solution for the IH-OCP, the infinite-horizon optimal cost is computed by taking a long enough horizon for each problem without the terminal cost and using iLQR to solve the optimization. The infinite horizon cost for the car-like robot was calculated with a horizon of $500$. The fish model fails to converge without a terminal cost, so we do not plot the infinite horizon cost for those systems. This is another reason to have a regularizing terminal cost for complex systems for stability, in addition to the free final time. It is observed that for any $T>T(\Omega_M)$, the cost $J^T_\infty(x)$ converges to the true optimal cost of the IH-OCP for the car-like robot, while still converging to the forward-invariant set for the more complex cases (Fig.~\ref{fig:fish}). It is also observed that, for smaller horizons less than the first hitting time, the terminal cost remains high, and correspondingly the system fails to converge to the target state. The experiments, thus, empirically validate Lemma~\ref{lemma:first hitting time}, wherein increasing the horizon past the first hitting time does not lead to a significant decrease in the cost, and hence, the first hitting time is sufficient to reach the goal set.

In order to observe the dependence of the transfer time on the initial state, the above experiment is repeated for different initial conditions for the Fish and Car-like robot systems. The initial and target states for the corresponding cases are tabulated in Table~\ref{tab:experiments}. We observe clearly that the hitting time $T(\Omega_M)$ depends on the initial condition of the system (Fig.~\ref{fig:fish}).

\begin{table}[!htbp]
    \centering
    \begin{tabular}{|c|c|c|c|c|}
        \hline
         System & Case \#& Initial state & Terminal state & $M$  \\
         \hline 
         \hline
         Car-like&1 & $(6,-6,\pi/3,10)$ & $(10,5,0,0)$ & $0.1$ \\
         \hline
         Car-like &2 & $(10.46,6.46,1.26,-0.33)$ & $(10,5,0,0)$ & $0.1$\\
         \hline
         Fish&1 & $(0.5,0.25,1)$  & $(0.4,0.2,1)$ & $30$\\
         \hline
         Fish&2 & $(-0.2,-0.1,-0.5)$ & $(0.4,0.2,1)$ & $30$\\
         \hline
    \end{tabular}
    \caption{Initial and target states for the Car-like robot and Fish-robot, for observing the dependence of transfer time on initial state.}
    \label{tab:experiments}
\end{table}

\subsection{Comparison with MPC}\label{sec:mpc}

Nonlinear MPC is widely used to solve infinite horizon problems \cite{mayne2014model}. As mentioned in the discussion in Section~\ref{sec:discussion}, the horizon used to solve the open-loop optimal control problem in MPC is unclear in the literature. In order to study the effect of horizon selection, we perform an experiment and test out different values of the horizon in the car-like system, evaluating the performance in terms of the cost and the time taken to transfer inside the terminal set. In Figure~\ref{fig:mpc}, we compare different MPC policies labeled as `\textit{MPC}-$N$', where $N$ stands for the horizon used to solve the open-loop problem for the corresponding MPC policy. We do this for two cases, corresponding to different initial states of the system mentioned in Table~\ref{tab:experiments}. The cost for the MPC policy is computed by running the MPC policy until time $T$, calculating the cost incurred and the terminal cost due to the error in the state at time $T$. As indicated in Figure~\ref{fig:mpc}-(a) and \ref{fig:mpc}-(c), the MPC policy with horizon 20 (MPC-20) converges to a suboptimal cost in both cases. MPC-40 also converges to a suboptimal cost for Case 1 while it converges to the optimum infinite horizon cost in Case 2. MPC-80 converges to the optimum cost in both cases. Hence, choosing a heuristic horizon leads to suboptimal performance. In Figure~\ref{fig:mpc}-(b) and \ref{fig:mpc}-(d), we show when the system enters the terminal set $(\phi(x)\leq M)$ under the different policies, and the corresponding transfer times are shown in Table~\ref{tab:transfer_times}. As the table shows, MPC policy using a horizon $N$ does not necessarily enter the terminal set in $N$ steps. Hence, the \ref{eq:ACOCP} construction, wherein we use a free final time, ensures optimality as well as stability guarantees to enter the terminal set. The free final time is crucial since different initial states will have different optimal transfer times, as corroborated by the two cases shown for the car-like system.

\begin{figure}[!htbp]
\centering
    \sbox0{\subfloat[Car-like - Total cost - Case 1]{\includegraphics[width=0.5\linewidth]{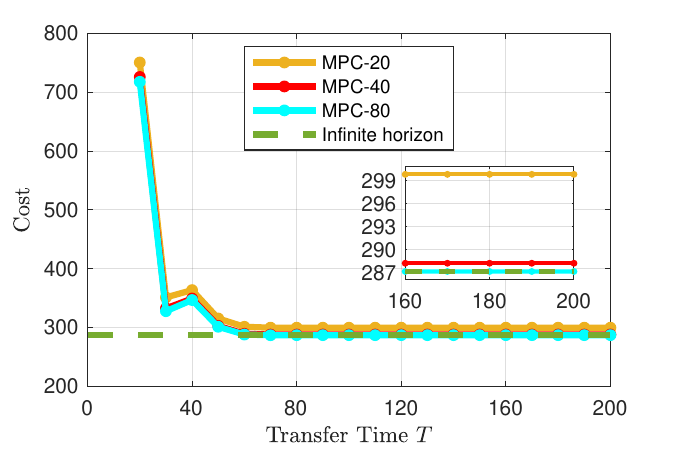}}}
    \sbox1{\subfloat[Car-like - Terminal cost - Case 1]{\includegraphics[width=0.5\linewidth]{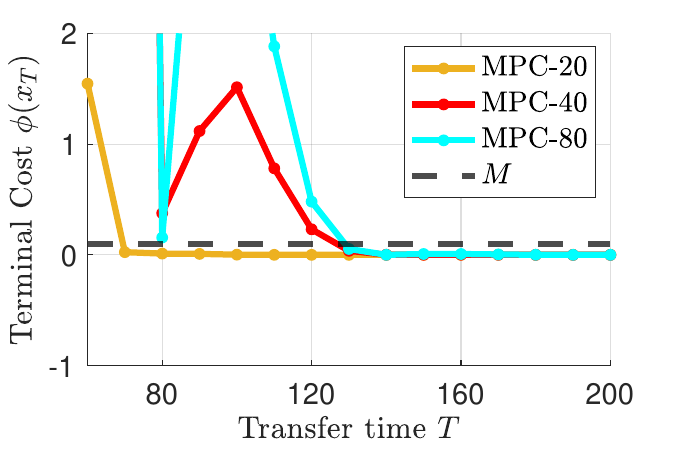}}}
    \sbox2{\subfloat[Car-like - Total cost - Case 2]{\includegraphics[width=0.5\linewidth]{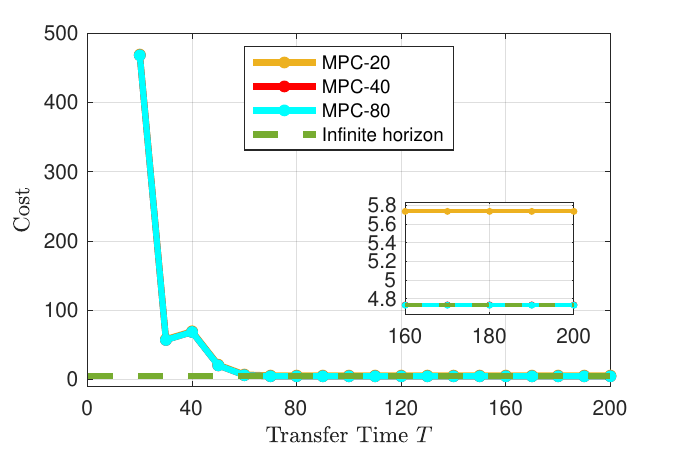}}}
    \sbox3{\subfloat[Car-like - Terminal cost - Case 2]{\includegraphics[width=0.5\linewidth]{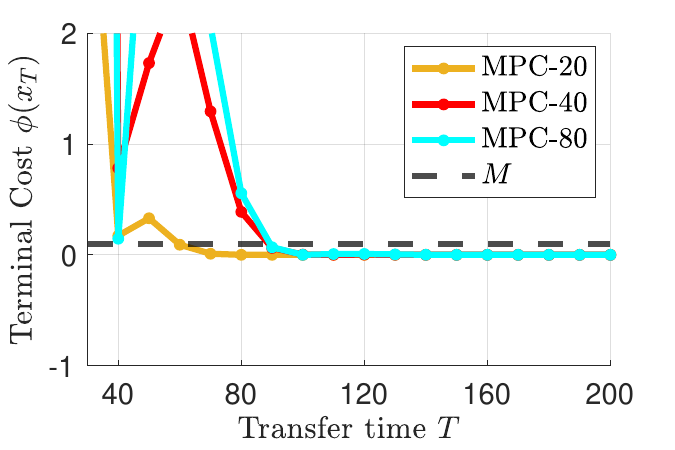}}}
    \centering
    \usebox0\hfil \usebox1\par
  \usebox2\hfil \usebox3\par
    \caption{Comparison of different MPC policies on the car-like robot system. It can be observed that the optimal horizon for MPC depends on the initial state, and choosing a small horizon leads to suboptimal performance.}
    \label{fig:mpc}
\end{figure}
\begin{table}[!htbp]
    \centering
    \begin{tabular}{|c|c|c|c|c|}
        \hline
         \textbf{Control Law} & \textbf{Transfer Time: Case-1} & \textbf{Case-2}  \\
         \hline 
         the
         Ours (Finite horizon) & 80 & 40\\
         \hline
         MPC - 20 & 70 & 60\\
         \hline
         MPC - 40 & 130 & 90\\
         \hline
         MPC - 80 & 130 & 90\\
         \hline
    \end{tabular}
    \caption{Transfer time into the terminal set for each control law used. }
    \label{tab:transfer_times}
\end{table}

\section{Conclusions}
In this paper, we have developed a tractable approach to the approximate solution of nonlinear infinite horizon optimal control problems that is globally asymptotically stabilizing and converges to the true optimal solution in the limit of a vanishing terminal set. We relax the requirement of linear controllability around the origin used in previous work and extend the results to applications involving nonholonomic systems. Empirical results show that the practical convergence occurs in a very short time and differs based on the initial state of the system, justifying the need for a free final time formulation. Future work will involve the incorporation of state and control constraints and the testing of the approach on a suite of nonlinear problems with varying degrees of complexity. We shall also consider the extension of the approach to the problem of optimal nonlinear output feedback control along with a suitable data-based generalization.
\printbibliography
\end{document}